\newtheorem{thm}{Theorem}[section]
\newtheorem{theorem}[thm]{Theorem}
\newtheorem {definition}[thm]{Definition}
\newtheorem{corollary}[thm]{Corollary}
\newtheorem{lemma}[thm]{Lemma}
\newtheorem{proposition}[thm]{Proposition}
\newtheorem{remark}[thm]{Remark}
\numberwithin{equation}{section}
\begin{document}

\title[Invariants of plane curve singularities and Pl\"ucker formulas]{Invariants of plane curve singularities and Pl\"ucker formulas in positive characteristic}      
\author{Nguyen Hong Duc}
\address{Institute of Mathematics, Vietnam Academy of Science and Technology\newline \indent18 Hoang Quoc Viet Road, Cau Giay District, \newline \indent  10307, Hanoi.} 
\email{nhduc@math.ac.vn}
\date{\today}                  

\subjclass[2010]{Primary 14H20; Secondary 14B05}
\keywords{Invariants of plane curve singularities, Pl\"ucker formulas, wild vanishing cycles}
\begin{abstract}
We study classical and new invariants of plane curve singularities $f\in K[[x,y]]$, $K$ an algebraically closed field of characteristic $p\geq 0$. It is known, in characteristic zero, that $\kappa(f)=2\delta(f)-r(f)+\mathrm{mt}(f)$, where $\kappa(f), \delta(f),r(f)$ and $\mathrm{mt}(f)$ denotes kappa invariant, delta invariant, number of branches and multiplicity of $f$ respctively. For arbitrary characteristic, by introducing new invariant $\gamma$, we prove in this note that $\kappa(f)\geq \gamma(f)+\mathrm{mt}(f)-1\geq 2\delta(f)-r(f)+\mathrm{mt}(f)$ with equalities if and only if the characteristic $p$ does not divide the multiplicity of any branch of $f$. As applications we obtain some Pl\"ucker formulas for projective plane curves in positive characteristic. Moreover we show that if $p$ is ``big'' for $f$, resp. for irreducble curve $C\subset \mathbb{P}^2$ (in fact, if $p > \kappa(f)$, resp. $p > \deg C(\deg C-1)$), then $f$, resp. $C$ has no wild vanishing cycle.  
\end{abstract}
\maketitle
\section{\bf Introduction}
Let $K$ be an algebraically closed field of characteristic $p\geq 0$ and $K[[x,y]]$ the ring of formal power series. We study classical and new invariants: Milnor number, kappa invariant, delta invariant, Swan conductor, multiplicity and their relations  (see Section \ref{sec2} for definitions and facts). 

Let us recall some notions and facts on plan curve singularities (see \cite{GLS06} and \cite{Ng13} for proofs). Let $f,g\in K[[x,y]]$. We denote by $i(f,g):=\dim K[[x,y]]/\langle f,g\rangle$ the {\em intersection multiplicity} of $f$ and $g$. The {\em Milnor number} (resp. {\em Kappa invariant}) of $f$ is the intersection multiplicity $i(f_x,f_y)$ (resp. $i(f, \alpha f_x+\beta f_y)$), where $f_x,f_y$ are the partials of $f$ and $(\alpha:\beta)\in \mathbb{P}^1$ is generic. 

If $f$ is reduced and $\bar{R}$ is a normalization of $R:=K[[x,y]]/\langle f\rangle$, then the dimension $\dim \bar{R}/R$ is called the  {\em delta invariant} of $f$, denoted by $\delta(f)$. It relates to the Milnor number by the Milnor formula (see \cite{Mil68}) stating that, if $\mathrm{char}(K)=0$ then
$$\mu(f)=2\delta(f)-r(f)+1,$$
where $r(f)$ is the number of branches of $f$. This does not hold in positive characteristic because of the existence of wild vanishing cycles. More precisely, using \'etale cohomology, Deligne showed that the Milnor number $\mu$ (resp. $2\delta(f)-r(f)+1$, resp. $\mathrm{Sw}(f)$) is equal to the number of total (resp. ordinary, resp. wild) vanishing cycles (of the Milnor fiber) of $f$ (cf. \cite{Del73}, \cite{M-HW01}), where
$\mathrm{Sw}(f)$ denotes the Swan conductor of $f$ (see \cite{Del73}, 1.7, 1.8 for the definition). 
This implies that
$$\mu(f)= 2\delta(f)-r(f)+1+\mathrm{Sw}(f)$$
and therefore
$$\mu(f)\geq 2\delta(f)-r(f)+1.$$
However it is unknown how a plane curve singularity without wild vanishing cycle can be reasonably characterized. In this paper we will give a partial answer for this problem saying that if the characteristic $p$ is ``big'' for $f$ (resp. for irreducible projective curve $C$) (e.g. $p > \kappa(f)$, resp. $p > \deg C(\deg C-1)$), then $f$, resp. $C$ has no wild vanishing cycle.  (Corollary \ref{coro33}). 

Delta invariant is also related to kappa invariant by the following formula in characteristic zero (cf. \cite{GLS06}):
$$\kappa(f)=2\delta(f)-r(f)+\mathrm{mt}(f),$$
where $\mathrm{mt}(f)$ is the {\em multiplicity} of $f$, defined to be the maximal of $k$ for which $\langle f\rangle\subset \langle x,y\rangle^k$. It is easy to see that the equality does not hold in positive characteristic. 
The aim of this paper is to see how is this relation in positive characteristic. We introduce and study new invariants, $\gamma, \tilde{\gamma}$ and prove in Theorem \ref{thm21}, that
$$\gamma(f)\geq 2\delta(f)-r(f)+1$$
with equality if and only if $p$ is {\em right intersection multiplicity good} for $f$ (i.e. there exist a coordinate $X,Y$ such that for any branch $f_i$ of $f$, $p$ does not divide at least one of $i(f_i,X)$ and $i(f_i,Y)$, see Definition \ref{def22}). We then obtain the main result of the present paper (Theorem \ref{thm31}) stating that there is always the inequality
$$\kappa(f)\geq 2\delta(f)-r(f)+\mathrm{mt}(f)$$
with equality if and only if $p$ is {\em multiplicity good} for $f$ (i.e. it does not divide the multiplicity of any branch of $f$, see Definition \ref{def22}).

We now apply our main result to the Pl\"ucker formulas. Recall that the first Pl\"ucker formula gives a relation between the degree $d$ of an irreducible curve $C\subset \mathbb P^2$ and the degree $\check{d}$ of its dual curve  (cf. \cite{Wal62}, \cite{Pie78}). Precisely, one has (see \ref{eq31})
$$d(d-1)=\deg\rho\cdot\check{d}+\sum_{P\in\mathrm{Sing}(C)}\kappa(f_P),$$
where $\deg\rho$ is the degree of the dual map $\rho$, $\mathrm{Sing}(C)$ is the singular locus of $C$ and $f_P=0$ is a local equation of $C$ at $P$.
Applying Theorem \ref{thm31} we obtain a kind of Pl\"ucker formula for an irreducible plane curve $C$ in positive characteristic stating that one has
\begin{eqnarray*}
\deg(\rho)\cdot\check{d}&\leq& d(d-1)-2\delta(C)+r(C)-\mathrm{mt}(C)
\end{eqnarray*}
with equality if and only if $p$ is {\em multiplicity good} for $C$ (i.e. it is multiplicity good for all local functions $f_P$ of $C$ at singular points $P$), where $\delta(C)$ (resp. $r(C)$, resp. $\mathrm{mt}(C)$) is the sum of the delta invariants (resp. number of branches, resp. multiplicities) of $f_P$ (see Corollary \ref{coro34}). We show further that if $p$ is ``big'' for $C\subset \mathbb{P}^2$ (e.g. $p > d(d-1)$), then one has
\begin{eqnarray*}
\deg(\rho)\cdot\check{d}&=& d(d-1)-2\delta(C)+r(C)-\mathrm{mt}(C)\\
&=& d(d-1)-\mu(C)-\mathrm{mt}(C)+s(C),
\end{eqnarray*}
where $\mu(C)$ denotes the sum of the local Milnor numbers $\mu(f_P)$ and $s(C)$ the number of singular points of $C$.

The paper is organized as follows. In Section \ref{sec2} we introduce and study a new invariant (gamma invariant) and its relation to classical invariants of plane curve singularities which play an important role in the proof of the main result. We present and prove the main result and its applications to Pl\"ucker formula in Section \ref{sec3}. Our method is based on resolution and parametrization of plane curve singularities. 
\subsection*{Acknowledgement} 
A part of this article was done in my thesis \cite{Ng13} under the supervision of Professor Gert-Martin Greuel at the Technische Universit\"at Kaiserslautern. I am grateful to him for many valuable suggestions. The author's research is funded by Vietnam National Foundation for Science and Technology Development (NAFOSTED) under Grant Number 101.04-2014.23.

\section{\bf Gamma invariants}\label{sec2}
We introduce and study new (gamma) invariants $\gamma,\tilde{\gamma}$ of plane curve singularities which have not been considered before. In characteristic zero, these invariants coincide and are equal to the Milnor number (see Remark \ref{rm21}). So they may be considered as generalizations of the Milnor number in positive characteristic and are believed to be useful in studying classical invariants. In this section we use them to connect the delta and kappa invariant. We will show, in Proposition \ref{pro21}, that
$$\kappa(f)\geq \gamma(f)+\mathrm{mt}(f)-1$$
and in Theorem \ref{thm22}, that
$$\gamma(f)\geq 2\delta(f)-r(f)+1$$
and obtain the inequality in the main result of the paper (Theorem \ref{thm31}). 

\begin{definition}\label{def21}{\rm
Let $f\in K[[x,y]]$ be reduced. The number $\tilde{\gamma}_{x,y}(f)$ (or $\tilde{\gamma}(f)$, if the coordinate $\{x,y\}$ is fixed) of $f$, is defined as follows: 
\begin{itemize}
\item[(1)] $\tilde{\gamma}(x):=0$, $\tilde{\gamma}(y):=0$.
\item[(2)] If $f$ is irreducible and {\em convenient} (i.e. $i(f,x),i(f,y)<\infty$), then
$$\tilde{\gamma}(f):=\min\{i(f,f_x)-i(f,y)+1, i(f,f_y)-i(f,x)+1\}.$$
\item[(3)] If $f=f_1\cdot\ldots\cdot f_r$, then
$$\tilde{\gamma}(f):=\sum_{i=1}^r\Big(\tilde{\gamma}(f_i)+\sum_{j\neq i} i(f_i,f_j)\Big)-r+1.$$
\end{itemize}
}\end{definition}
\begin{definition}\label{def211}{\rm
The {\em gamma invariant} of a reduced plane curve singularity $f$, denoted by $\gamma(f)$, is the minimum of $\tilde{\gamma}_{X,Y} (f)$ for all coordinates $X,Y$.
}\end{definition}
\begin{remark}\label{rm21}{\rm
(a) In characteristic zero, $\gamma(f)=\tilde{\gamma}(f)=\mu(f)$ due to Theorems \ref{thm21}, \ref{thm22} and the Milnor formula. 

(b) In general we have, by definition, that $\gamma(f)\leq \tilde{\gamma}(f)$ (with equality if $p$ is im-good for $f$, see Corollary \ref{coro21}) and that $\gamma(f)= \tilde{\gamma}(g)$ for some $g$ right equivalent to $f$ ($f$ is called {\em right equivalent} to $g$, denoted by $f \sim_r g$, if there is an automorphism $\Phi\in Aut_K (K[[x,y]])$ such that $f = \Phi(g)$).

(c) The number $\tilde{\gamma}$ depends on the choice of coordinates, i.e. it is not invariant under right equivalence . E.g. $f=x^3+x^4+y^5$ and $g=(x+y)^3+(x+y)^4+y^5$ in $K[[x,y]]$ with $\mathrm{char} (K)=3$ and then $f\sim_r g$, but $\tilde{\gamma}(f)=8$, $\tilde{\gamma}(g)=10$. However, as we will see in Proposition \ref{pro22}, if the characteristic $p$ is multiplicity good for $f$ then $\tilde{\gamma}(f)=\tilde{\gamma}(g)$ for all $g$ contact equivalent to $f$. Recall that $f, g$ are {\em contact equivalent} if there is an automorphism $\Phi\in Aut_K (K[[x,y]])$ and a unit $u\in K[[x,y]]$ such that $f = u\cdot \Phi(g)$, and we denote this by $f \sim_c g$.

(d) It follows from the definition that $\tilde{\gamma}(u)=1$ and $\tilde{\gamma}(u\cdot f)=\tilde{\gamma}(f)$ for every unit $u$ and therefore $\gamma$ is invariant under contact equivalence.

(e) Milnor number $\mu$ is invariant under right equivalence. The numbers $\delta, \kappa, \mathrm{mt}, r,i$ are invariant under contact equivalence (see, for instance \cite{Ng13}, Prop. 1.2.19 for the invariance of $\delta$). This means that, if $f\sim_c g$ then
$$\delta(f)=\delta(g),\ \kappa(f)=\kappa(g),\ \mathrm{mt}(f)=\mathrm{mt}(g)\text{ and } r(f)=r(g).$$
Moreover, for any $\Phi\in Aut_K\left(K[[x,y]]\right)$ and units $u,v$, one has
$$i(f,h)=i\left(u\cdot\Phi(f),v\cdot\Phi(h)\right).$$
}\end{remark}
Before studying in detail gamma invariants, we collect several facts on invariants of plane curve singularities which we use later. For proofs, we refer to \cite{GLS06} and \cite{Ng13}.
\begin{remark}\label{rm22}
{\rm 
(a) If $f\in K[[x,y]]$ is irreducible then there exists a couple $(x(t),y(t))\in K[[t]]^2$ such that $f(x(t),y(t))=0$ and it satisfies the following universal property: for each $(u(t),v(t))\in K[[t]]^2$ with $f(u(t),v(t))=0$, there exists a unique series $h(t)\in K[[t]]$ such that $u(t)=x(h(t)) \text{ and } v(t)=y(h(t))$. We call such a couple a {\em parametrization} of $f$. The order of its parametrization is related to the multiplicity of a singularity as follows $\mathrm{mt}(f)=\min\{\mathrm{ord}\ x(t), \mathrm{ord}\ y(t)\}$. Here for each univariate formal power series $\varphi(t)$, $\mathrm{ord}\ \varphi(t)$ denotes the order (or multiplicity, or valuation) of $\varphi(t)$. For each formal series $g$, one has $i(f,g)=\mathrm{ord}\ g(x(t),y(t))$ (cf. \cite{GLS06}, Chap. 1, Prop. 3.12).

(b) If $f$ is irreducible, then
$$\kappa(f)=\min\{i(f,f_x),i(f,f_y)\}.$$
Indeed, taking a parametrization $(x(t),y(t))$ of $f$ we obtain that
\begin{eqnarray*}
\kappa(f)=\mathrm{ord}\ \left(\alpha f_x(x(t),y(t))+\beta f_y(x(t),y(t))\right),
\end{eqnarray*}
which equals to the minimum of $i(f,f_x)$ and $i(f,f_y)$ since $(\alpha:\beta)$ is generic.

(c) If $f$ is convenient, then
\begin{eqnarray*}
\tilde{\gamma}(f)&=&i(f,\alpha xf_x+\beta yf_y)-i(f,x)-i(f,y)+1,
\end{eqnarray*}
where $(\alpha:\beta) \in \mathbb P^1$ is generic. In fact, assume first that $f$ is irreducible and take a parametrization $(x(t),y(t))$ of $f$. Then one has
\begin{eqnarray*}
\tilde{\gamma}(f)&=&\min\{i(f,f_x)-i(f,y)+1, i(f,f_y)-i(f,x)+1\}\\
&=&\min\{i(f,f_x)+i(f,x), i(f,f_y)+i(f,y)\}-i(f,f_y)-i(f,x)+1\\
&=&\min\{\mathrm{ord}\ x(t)f_x(x(t),y(t)),\mathrm{ord}\ y(t)f_y(x(t),y(t))\}-i(f,f_y)-i(f,x)+1\\
&=&i(f,\alpha xf_x+\beta yf_y)-i(f,x)-i(f,y)+1
\end{eqnarray*}
with $(\alpha:\beta) \in \mathbb P^1$ generic. The reducible case is thus followed by using Definition \ref{def21} and simple calculations (for more detail, see \cite{Ng13}, Lemma 2.7.2(ii)). 

(d) If $f=f_1\cdot\ldots\cdot f_r$, then
\begin{eqnarray*}
i(f,g)&=& \sum_{i=1}^r i(f_i,g),\ \forall g\in K[[x,y]];\\
2\delta(f)&=&\sum_{i=1}^r\Big(2\delta(f_i)+\sum_{j\neq i} i(f_i,f_j)\Big);\\
\kappa(f)&=&\sum_{i=1}^r\Big(\kappa(f_i)+\sum_{j\neq i} i(f_i,f_j)\Big).
\end{eqnarray*}
Here the two first equalities follow from \cite{GLS06}, Chap. 1, Prop. 3.12 and Lemma 3.32 respectively and the last one is done by simple caculations.
}\end{remark}
\begin{definition}\label{def22}{\rm
Let $\mathrm{char}(K)=p\geq 0$ and let $f=f_1\cdot\ldots\cdot f_r\in K[[x,y]]$ be reduced with $f_i$ irreducible. The characteristic $p$ is said to be
\begin{itemize}
\item[1.] {\em multiplicity good} ({\em m-good}) for $f$ if the multiplicities $\mathrm{mt}(f_i)\neq 0$ (mod $p$) for all $i=1,\ldots,r$; 
\item[2.] {\em intersection multiplicity good} ({\em im-good}) for $f$ if for all $i=1,\ldots,r$, either\\
\centerline{$ i(f_i,x)\neq 0$ (mod $p$) or $i(f_i,y)\neq 0$  (mod $p$);}
\item[3.] {\em right intersection multiplicity good} ({\em right im-good}) for $f$ if it is im-good for $f$ after some change of coordinate. That is, it is im-good for some $g$ right equivalent to $f$. 
\end{itemize}
}\end{definition}
Note that these notions are trivial in characteristic zero, i.e. if $p=0$ then it is always m-good, im-good and right im-good for $f$. In general we have
$$\text{``m-good"} \Longrightarrow  \text{``im-good"} \Longrightarrow  \text{``right im-good"}.$$
The following proposition gives us the first relations between the gamma invariants and classical invariants.
\begin{proposition}\label{pro21}
Let $f\in K[[x,y]]$ be reduced. Then
$$\gamma(f)\leq\tilde{\gamma}(f)\leq \kappa(f)-\mathrm{mt}(f)+1$$ with equality if $p$ is m-good for $f$.
\end{proposition}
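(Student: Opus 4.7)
The first inequality $\gamma(f)\leq\tilde{\gamma}(f)$ is immediate from Definition \ref{def211}, so the real content lies in $\tilde{\gamma}(f)\leq \kappa(f)-\mathrm{mt}(f)+1$ and its equality condition. My plan is to first dispose of the irreducible case, and then bootstrap to arbitrary reduced $f$ by playing the multiplicativity formula of Lemma \ref{lm21}(ii) against those for $\kappa$ and $\mathrm{mt}$ in Remark \ref{rm22}.

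For irreducible $f$, the non-convenient subcases $f=x$ and $f=y$ are trivial (for instance $\tilde\gamma(x)=0=\kappa(x)-\mathrm{mt}(x)+1$, and $p\nmid 1$ is automatic). Otherwise $f$ is irreducible and convenient, and Lemma \ref{lm21}(i) gives $\tilde\gamma(f)=\min\{A-b+1,\,B-a+1\}$ with $A:=i(f,f_x)$, $B:=i(f,f_y)$, $a:=i(f,x)$, $b:=i(f,y)$. A primitive parametrization $(x(t),y(t))$ of $f$ yields $(a,b)=(\mathrm{ord}\,x(t),\mathrm{ord}\,y(t))$ and $\mathrm{mt}(f)=\min\{a,b\}$, while the generic-pencil property used in the proof of Lemma \ref{lm21} gives $\kappa(f)=\min\{A,B\}$. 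Swapping the roles of $x$ and $y$ if necessary, I may assume $a\leq b$; then
\[ \min\{A-b+1,\,B-a+1\} \;\leq\; \min\{A,B\}-a+1 \]
follows by a short case split on whether $A\leq B$ or $A>B$, using only $b\geq a$.

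For the equality under the m-good hypothesis, I differentiate the identity $f(x(t),y(t))=0$ in $t$ to obtain the key relation $A+\mathrm{ord}\,x'(t) = B+\mathrm{ord}\,y'(t)$. Since $p\nmid\mathrm{mt}(f)=a$, the leading term of $x(t)$ survives differentiation and $\mathrm{ord}\,x'(t)=a-1$, whereas $\mathrm{ord}\,y'(t)\geq b-1$ always. Substituting forces $A-B\geq b-a\geq 0$, so $\kappa(f)=B$, and therefore $\tilde\gamma(f)=B-a+1=\kappa(f)-\mathrm{mt}(f)+1$. This settles the irreducible case. For a reduced $f=f_1\cdots f_r$, combining the multiplicativity formulas makes the pairwise intersection terms cancel and reduces the whole statement to
\[ \kappa(f)-\mathrm{mt}(f)+1-\tilde\gamma(f) \;=\; \sum_{i=1}^{r}\bigl[\kappa(f_i)-\mathrm{mt}(f_i)+1-\tilde\gamma(f_i)\bigr], \]
a sum of non-negative terms by the irreducible case, vanishing precisely when $p$ is m-good for each branch. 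The non-convenient reduced case of Definition \ref{def21}(3) is handled by the same telescoping, since the defining formula there is engineered to match branch-additivity, as one can check against Lemma \ref{lm21}(ii) directly.

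The main obstacle I anticipate is arranging the irreducible analysis so that the inequality and its sharp equality criterion emerge together. The delicate point is control of $\mathrm{ord}\,y'(t)$ in positive characteristic: only $\mathrm{ord}\,y'(t)\geq b-1$ is automatic, with strict inequality when $p\mid b$, and it is precisely this slack (or the analogous slack when $p\mid a$) that creates the gap $\tilde\gamma(f)<\kappa(f)-\mathrm{mt}(f)+1$ exactly in the non-m-good range.
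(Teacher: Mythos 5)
Your treatment of the second inequality $\tilde{\gamma}(f)\leq \kappa(f)-\mathrm{mt}(f)+1$ and of its equality under the m-good hypothesis is correct and is essentially the paper's own argument: Lemma \ref{lm21}(i) for the irreducible convenient case, the relation $i(f,f_x)+\mathrm{ord}\,x'(t)=i(f,f_y)+\mathrm{ord}\,y'(t)$ obtained by differentiating $f(x(t),y(t))=0$, the observation that $p\nmid \mathrm{mt}(f)$ forces $\mathrm{ord}\,x'(t)=\mathrm{mt}(f)-1$ while $\mathrm{ord}\,y'(t)\geq \mathrm{ord}\,y(t)-1$, and the reduction to branches by telescoping Lemma \ref{lm21}(ii) against Remark \ref{rm22}.

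There is, however, one piece of the statement you do not prove: the claim ``with equality if $p$ is m-good'' also asserts $\gamma(f)=\tilde{\gamma}(f)$, and this is \emph{not} immediate from Definition \ref{def211} --- the definition only gives $\gamma(f)\leq\tilde{\gamma}(f)$, and Remark \ref{rm21}(c) shows $\tilde{\gamma}$ genuinely depends on coordinates, so the minimum could a priori be attained strictly below $\tilde{\gamma}_{x,y}(f)$. The paper closes this by choosing $g\sim_r f$ with $\gamma(f)=\tilde{\gamma}(g)$, noting that m-goodness is preserved under right equivalence (branch multiplicities are invariant), and applying the already-established equality to $g$ together with the invariance of $\kappa$ and $\mathrm{mt}$ from Remark \ref{rm21}(e) to get $\gamma(f)=\tilde{\gamma}(g)=\kappa(g)-\mathrm{mt}(g)+1=\kappa(f)-\mathrm{mt}(f)+1=\tilde{\gamma}(f)$. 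You should add this step. Separately, your parenthetical claim that the defect vanishes ``precisely when'' $p$ is m-good overstates what your irreducible analysis establishes (you only proved the ``if'' direction, which is all the proposition asks for); I would drop the ``precisely.''
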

\begin{proof}
The first inequality is trivial. The proof of the second inequality (and equality) will be divided into two steps:\\
{\bf Step 1}: Suppose that $f$ is irreducible. 

One has
\begin{eqnarray*}
\tilde{\gamma}(f)&=&\min\{i(f,f_x)-i(f,y)+1, i(f,f_y)-i(f,x)+1\}\\
&\leq & \min\{i(f,f_x)-\mathrm{mt}(f)+1, i(f,f_y)-\mathrm{mt}(f)+1\}\\
&=& \min\{i(f,f_x), i(f,f_y)\}-\mathrm{mt}(f)+1\\
&=& \kappa(f)-\mathrm{mt}(f)+1.
\end{eqnarray*}

Assume that $p$ is m-good for $f$. Let $(x(t),y(t))$ be a parametrization of $f$. We may assume that $\mathrm{ord\ }x(t)\leq \mathrm{ord\ }y(t)$, so that $m=\mathrm{ord\ }x(t)\leq \mathrm{ord\ }y(t)$. Since $f(x(t),y(t))=0$,
$$f_x(x(t),y(t))\cdot x'(t)+f_y(x(t),y(t))\cdot y'(t)=0.$$
Therefore,
$$i(f,f_x)+\mathrm{ord} x'(t)=i(f,f_y)+\mathrm{ord} y'(t).$$
It follows that
$$i(f,f_x)-\mathrm{ord} y(t)\geq i(f,f_y)-\mathrm{ord} x(t)=i(f,f_y)-m,$$
since $\mathrm{ord} x'(t)=\mathrm{ord} x(t)-1=m-1$ and $\mathrm{ord} y'(t)\geq \mathrm{ord} y(t)-1$. This implies that
$$\tilde{\gamma}(f)=i(f,f_y)-m+1\text{ and } i(f,f_y)\leq i(f,f_x).$$
Hence
\begin{eqnarray*}
\tilde{\gamma}(f)&=&\min\{i(f,f_x)-i(f,y)+1, i(f,f_y)-i(f,x)+1\}\\
&= &i(f,f_y)-m+1\\
&=& \min\{i(f,f_x), i(f,f_y)\} -m+1\\
&=& \kappa(f)-\mathrm{mt}(f)+1.
\end{eqnarray*}
{\bf Step 2}: The general case as $f=f_1\cdot\ldots\cdot f_r$ with $f_i$ irreducible, follows from the first step and Remark \ref{rm22}.

It remains to prove that if $p$ is m-good for $f$ then $\gamma(f)=\tilde{\gamma}(f)$. Indeed, take $g$ right equivalent to $f$ such that $\gamma(f)=\tilde{\gamma}(g)$. Since $p$ is also m-good for $g$, it follows from the second equality that
$$\gamma(f)=\tilde{\gamma}(g)=\kappa(g)-\mathrm{mt}(g)+1,$$
and hence $\gamma(f)=\kappa(f)-\mathrm{mt}(f)+1,$ due to Remark \ref{rm21}(e). This completes the proposition.
\end{proof}
The following proposition says that the number $\tilde{\gamma}$ is invariant under contact equivalence in the class of singularities for which $p$ is m-good. It will be shown in Corollary \ref{coro21} that $\tilde{\gamma}$ is invariant under contact equivalence in the class of singularities for which $p$ is im-good.
\begin{proposition}\label{pro22}
Let $f\in K[[x,y]]$ be reduced such that $p$ is m-good for $f$ and let $g\sim_c f$. Then $\tilde{\gamma}(g)=\tilde{\gamma}(f)$. In particular, $\gamma(f)=\tilde{\gamma}(f)$.
\end{proposition}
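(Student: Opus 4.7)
The plan is to leverage Proposition \ref{pro21}, which gives a closed formula $\tilde\gamma(f)=\kappa(f)-\mathrm{mt}(f)+1$ under the m-good hypothesis, together with the contact invariance of $\kappa$ and $\mathrm{mt}$ recorded in Remark \ref{rm21}(e).

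First, I would verify that the m-good hypothesis transfers from $f$ to any $g\sim_c f$. If $f=u\cdot \Phi(g)$ with $\Phi\in Aut_K(K[[x,y]])$ and $u$ a unit, and if $g=g_1\cdots g_r$ is the branch decomposition, then the branch decomposition of $f$ is (up to units) $\Phi(g_1)\cdots\Phi(g_r)$. Since automorphisms preserve multiplicity and multiplying by a unit does not change $\mathrm{mt}$, we have $\mathrm{mt}(\Phi(g_i))=\mathrm{mt}(g_i)$ for each $i$. Hence $p$ is m-good for $g$ whenever it is m-good for $f$.

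Second, I apply Proposition \ref{pro21} to both sides. This yields
\[
\tilde\gamma(f)=\kappa(f)-\mathrm{mt}(f)+1 \quad\text{and}\quad \tilde\gamma(g)=\kappa(g)-\mathrm{mt}(g)+1.
\]
Now Remark \ref{rm21}(e) tells us that $\kappa$ and $\mathrm{mt}$ are invariant under contact equivalence, so $\kappa(f)=\kappa(g)$ and $\mathrm{mt}(f)=\mathrm{mt}(g)$. Combining these identities gives $\tilde\gamma(f)=\tilde\gamma(g)$.

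For the ``in particular'' statement, recall from Definition \ref{def211} (and Remark \ref{rm21}(b)) that $\gamma(f)=\tilde\gamma(h)$ for some $h$ right equivalent to $f$. Since right equivalence is a special case of contact equivalence, $h\sim_c f$, and since m-goodness is preserved, the first part of the proposition applies to give $\tilde\gamma(h)=\tilde\gamma(f)$, hence $\gamma(f)=\tilde\gamma(f)$.

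I do not expect any serious obstacle: the only point requiring care is the preservation of m-goodness under contact equivalence, which reduces to the fact that automorphisms and units do not alter branch multiplicities; once this is noted, everything follows from Proposition \ref{pro21} and Remark \ref{rm21}(e) in a line.
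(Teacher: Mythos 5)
Your proof is correct and follows essentially the same route as the paper, whose entire proof of Proposition \ref{pro22} is the one-line citation of Proposition \ref{pro21} and Remark \ref{rm21}(e); you have simply spelled out the details (transfer of m-goodness under contact equivalence, the two applications of the formula $\tilde{\gamma}=\kappa-\mathrm{mt}+1$, and the reduction of the ``in particular'' claim via Remark \ref{rm21}(b)). No gaps.
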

\begin{proof}
This follows from Proposition \ref{pro21} and Remark \ref{rm21}(e). See \cite[Lemma 2.3.4]{Ng13} for a direct proof.
\end{proof}
For the proof of the main results of this section (Theorem \ref{thm21} and \ref{thm22}) we need the following two technical lemmas, which compare the gamma invariants after blowing ups. 
Let $f\in K[[x,y]]$ be an irreducible plane curve singularity and $(\beta:\alpha)\in \mathbb P^1$ be its {\em tangent direction} (cf. \cite{Cam80}, Lemma 3.4.5 and  \cite{GLS06}, Chap. 1, Lemma 3.19), i.e. $i(f, \alpha x-\beta y)>\mathrm{mt}(f)$. Assume that $(\beta:\alpha)$ is a point in the first chart of $\mathbb P^1$. Then the strict transform of $f$ is a formal series $\tilde{f}$ defined by $f(u,u(v+\frac{\alpha}{\beta}))=u^m\tilde{f}(u,v)$ (One can define $\tilde{f}$ similarly if $(\beta:\alpha)$ belongs to the second chart). 
\begin{lemma}\label{lm24}
Let $f\in K[[x,y]]$ be irreducible such that $m:=i(f,x)=i(f,y)$. Let $g\in K[[x,y]]$ be such that $f(x,y)=g(x,\alpha x-\beta y)$, where $(\beta:\alpha)\in \mathbb P^1$ is the unique tangent direction of $f$. Then 
\begin{itemize}
\item[(i)] $m=i(g,x)<i(g,y)$.
\item[(ii)] $\tilde{\gamma}(f)\geq \tilde{\gamma}(g)$.
\item[(iii)] If the characteristic $p$ is im-good for $g$ but not for $f$, then $$\tilde{\gamma}(f)>\tilde{\gamma}(g).$$ 
\end{itemize}
\end{lemma}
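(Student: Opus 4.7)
For (i), the relation $f(x,y) = g(x, \alpha x - \beta y)$ gives, upon setting $x = 0$, $f(0, y) = g(0, -\beta y)$, so $i(g, x) = \mathrm{ord}_y g(0, y) = i(f, y) = m$. Setting $y = 0$ gives $f(x, 0) = g(x, \alpha x)$, whence $i(g, y) = \mathrm{ord}_x g(x, 0) = \mathrm{ord}_x f(x, \alpha x / \beta) = i(f, \alpha x - \beta y) > m = \mathrm{mt}(f)$ by the very definition of the tangent direction. Here $\alpha,\beta \neq 0$ because $i(f, x) = i(f, y) = m = \mathrm{mt}(f)$ forces that neither $\{x = 0\}$ nor $\{y = 0\}$ is the tangent of $f$.

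For (ii), I apply Lemma \ref{lm21}(i) to $f$ and $g$, both of which are irreducible and convenient by (i). Writing $\phi(x,y) = (x, \alpha x - \beta y)$ so that $f = g \circ \phi$, the chain rule gives $f_y = -\beta\,(g_y \circ \phi)$ and $f_x = (g_x + \alpha g_y) \circ \phi$; since $\phi$ is a linear automorphism, intersection multiplicities are preserved, yielding $i(f, f_y) = i(g, g_y)$ and $i(f, f_x) = i(g, g_x + \alpha g_y) \geq \min\{i(g, g_x), i(g, g_y)\}$. Setting $k := i(g, y) > m$, Lemma \ref{lm21}(i) reads
\begin{eqnarray*}
\tilde{\gamma}(f) &\geq& \min\{i(g, g_x),\ i(g, g_y)\} - m + 1, \\
\tilde{\gamma}(g) &=& \min\{i(g, g_x) - k + 1,\ i(g, g_y) - m + 1\}.
\end{eqnarray*}
Since $k > m$ forces $i(g, g_x) - k + 1 < i(g, g_x) - m + 1$, the expression for $\tilde{\gamma}(g)$ is bounded above term-by-term by the lower bound for $\tilde{\gamma}(f)$, so $\tilde{\gamma}(g) \leq \tilde{\gamma}(f)$.

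For (iii), the hypothesis translates into $p \mid m$ and $p \nmid k$ (the failure of im-good for $f$ forces $p$ to divide $m$, and then im-good for $g$ forces $p \nmid k$). Choose a parametrization $(x(t), y(t))$ of $g$ with $\mathrm{ord}\, x(t) = m$ and $\mathrm{ord}\, y(t) = k$; then $\mathrm{ord}\, x'(t) \geq m$ (because $p \mid m$ kills the would-be $t^{m-1}$ term) while $\mathrm{ord}\, y'(t) = k - 1$. Differentiating $g(x(t), y(t)) = 0$ yields $i(g, g_x) + \mathrm{ord}\, x'(t) = i(g, g_y) + k - 1$, hence $i(g, g_x) \leq i(g, g_y) + k - m - 1$. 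This drives the minimum in $\tilde{\gamma}(g)$ into the first slot:
$$\tilde{\gamma}(g) = i(g, g_x) - k + 1 \leq i(g, g_y) - m.$$
A case split on the ordering of $i(g, g_x)$ and $i(g, g_y)$ (where cancellation in $i(g, g_x + \alpha g_y)$ can occur only when they are equal, and in that case the outer minimum is still $i(g, g_y)$) shows $\tilde{\gamma}(f) = \min\{i(g, g_x), i(g, g_y)\} - m + 1$; combined with the bound $i(g, g_x) \leq i(g, g_y) + k - m - 1$ this delivers $\tilde{\gamma}(f) - \tilde{\gamma}(g) \geq 1$. The main obstacle is the subcase $i(g, g_x) > i(g, g_y)$, where the gap equals $(k - m) - (i(g, g_x) - i(g, g_y))$ and its positivity is exactly what the refined order estimate from $p \nmid k$ (but $p \mid m$) provides.
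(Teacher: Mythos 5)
Your proof is correct and follows essentially the same route as the paper's: (i) via invariance of intersection multiplicities under the linear change of coordinates, (ii) via Lemma \ref{lm21}(i) and the chain rule relating $i(f,f_x),i(f,f_y)$ to $i(g,g_x),i(g,g_y)$, and (iii) via differentiating a parametrization of $g$ and exploiting $p\mid m$, $p\nmid k$ to force the minimum defining $\tilde{\gamma}(g)$ into the strictly smaller slot. Your term-by-term comparison of the two minima in (ii) is a slightly cleaner packaging of the paper's case split, but the substance is identical.
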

\begin{proof}
(i) It follows from Remark \ref{rm21} that
$$i(g,x)=i\big (g(x,\alpha x-\beta y),x\big )=i(f,x)=m$$
and
$$i(g,y)=i\big (g(x,\alpha x-\beta y),\alpha x-\beta y\big )=i(f,\alpha x-\beta y)>m,$$
which proves (i).

(ii) Let $(x(t),y(t))$ be a parametrization of $f$. Then
\begin{eqnarray*}
X(t)&=& x(t)\\
Y(t)&=& \alpha x(t)-\beta y(t)
\end{eqnarray*}
is a parametrization of $g$. Since $f(x,y)=g(x,\alpha x-\beta y)$,
\begin{eqnarray*}
f_x(x,y)&=& g_x(x,\alpha x-\beta y)+\alpha g_y(x,\alpha x-\beta y)\\
f_y(x,y)&=& -\beta g_y(x,\alpha x-\beta y)
\end{eqnarray*}
and therefore
\begin{eqnarray*}
f_x(x(t),y(t))&=& g_x(X(t),Y(t))+\alpha g_y(X(t),Y(t))\\
f_y(x(t),y(t))&=& -\beta g_y(X(t),Y(t)).
\end{eqnarray*}
We consider the two following cases:

$\bullet$ If $i(f,f_x)\geq i(f,f_y)$. Then 
\begin{eqnarray*}
\tilde{\gamma}(f) &=&\min\{i(f,f_x)-i(f,y)+1, i(f,f_y)-i(f,x)+1\}\\
&=& i(f,f_y)-m+1\\
&=& i(g,g_y)-i(g,x)+1\\
&\geq & \tilde{\gamma}(g).
\end{eqnarray*}

$\bullet$ If $i(f,f_x)< i(f,f_y)$, then $\mathrm{ord} f_x(x(t),y(t))<\mathrm{ord} f_y(x(t),y(t))=g_y(X(t),Y(t))$. This, together with the equality $f_x(x(t),y(t))=g_x(X(t),Y(t))+\alpha g_y(X(t),Y(t))$ implies that
$$\mathrm{ord} f_x(x(t),y(t))=\mathrm{ord} g_x(X(t),Y(t))<\mathrm{ord} g_y(X(t),Y(t)),$$
or equivalently $i(f,f_x)=i(g,g_x)< i(g,g_y)$. Hence
\begin{eqnarray*}
\tilde{\gamma}(g) &=&\min\{i(g,g_x)-i(g,y)+1, i(g,g_y)-i(g,x)+1\}\\
&=& i(g,g_x)-i(g,y)+1\\
&<& i(f,f_x)-i(g,x)+1\\
&=& i(f,f_x)-m+1\\
&=& \tilde{\gamma}(f).
\end{eqnarray*}

(iii) As in the proof of part (ii), if $i(f,f_x)< i(f,f_y)$ then $\tilde{\gamma}(f)>\tilde{\gamma}(g)$. Assume now that $i(f,f_x)\geq i(f,f_y)$. Then as above, we have
$$\tilde{\gamma}(f)= i(g,g_y)-i(g,x)+1.$$
Since $p$ is not im-good for $f$, $m= 0$ (mod $p$) and therefore $i(g,y) \neq 0$ (mod $p$) since $p$ is im-good for $g$. This, together with the equalities $\text{ord } Y(t)=i(g,y)$ and $\text{ord } X(t)=i(g,x)=m$ implies that 
$$\text{ord }\dot Y(t)=i(g,y)-1 \text{ and }\text{ord }\dot X(t)>m-1=i(g,x)-1.$$
On the other hand, since $g(X(t),Y(t))=0$, we have
$$\dot X(t)\cdot g_x(X(t),Y(t))+\dot Y(t)\cdot g_y(X(t),Y(t))=0.$$
It yields
$$\text{ord }\dot X(t)+\text{ord }g_x(X(t),Y(t))=\text{ord }\dot Y(t)+\text{ord }g_y(X(t),Y(t)),$$
or, equivalently
$$i(g,g_x)-\text{ord }\dot Y(t)=i(g,g_y)-\text{ord }\dot X(t).$$
This implies that
$$i(g,g_x)-i(g,y)<i(g,g_y)-i(g,x).$$
Hence 
\begin{eqnarray*}
\tilde{\gamma}(g) &=&\min\{i(g,g_x)-i(g,y)+1, i(g,g_y)-i(g,x)+1\}\\
&=& i(g,g_x)-i(g,y)+1\\
&<& i(g,g_y)-i(g,x)+1\\
&=& \tilde{\gamma}(f).
\end{eqnarray*}
\end{proof} 
\begin{lemma}\label{lm25}
Let $f\in K[[x,y]]$ be irreducible and $\tilde{f}$ its strict transform, then 
$$\tilde{\gamma}(f)\geq m^2-m+\tilde{\gamma}(\tilde{f}).$$
Assume moreover that $i(f,x)\neq i(f,y)$. Then 
\begin{itemize}
\item[(i)]  $\tilde{\gamma}(f)= m^2-m+\tilde{\gamma}(\tilde{f})$, with $m:=\mathrm{mt}(f)$ the multiplicity of $f$.
\item[(ii)] $p$ is im-good for $f$, if and only if it is so for $\tilde{f}$.
\end{itemize}
\end{lemma}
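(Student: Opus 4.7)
I compute $\tilde\gamma(f)$ and $\tilde\gamma(\tilde f)$ via the derivative formula of Lemma~\ref{lm21}(i), using the blow-up identity $f(x,y)=x^{m}\tilde f(x,y/x)$ to relate the derivatives of $f$ to those of $\tilde f$ along a parametrization.

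By the $x$--$y$ symmetry I may assume $m=i(f,x)\le i(f,y)=n$, so $m=\mathrm{mt}(f)$. If $m<n$ (which is the hypothesis of (i) and (ii)), the strict transform $\tilde f$ is centered at the origin of the $(x,v)$-chart, $v:=y/x$. If $m=n$, the strict transform sits instead at the tangent point $(0,c)$, so I first translate, writing $w=v-c$ with $\tilde f^{\ast}(x,w):=\tilde f(x,w+c)$; below I suppress the asterisk. Let $(x(t),y(t))$ be a primitive parametrization of $f$ with $\mathrm{ord}\,x(t)=m$; then $(x(t),v(t))$ parametrizes $\tilde f$, with $\mathrm{ord}\,v(t)=n-m$ in the first case (resp.\ $n^{\ast}:=\mathrm{ord}(v(t)-c)\ge 1$ in the second). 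Set $A:=i(\tilde f,\tilde f_x)$ and $B:=i(\tilde f,\tilde f_v)$.

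Differentiating $f(x,y)=x^{m}\tilde f(x,y/x)$ and substituting $y=xv$ gives
\[
f_y(x,y)=x^{m-1}\tilde f_v(x,v),\qquad f_x(x,y)=x^{m-1}\bigl(m\tilde f(x,v)+x\tilde f_x(x,v)-v\tilde f_v(x,v)\bigr).
\]
Evaluating along the parametrization and using $\tilde f(x(t),v(t))=0$ yields
\[
i(f,f_y)=m^{2}-m+B,\qquad i(f,f_x)\ge \min\{m^{2}+A,\ m^{2}-m+\mathrm{ord}\,v(t)+B\},
\]
with equality in the second whenever the two orders differ. Plugging these, together with $i(f,x)=m$, $i(f,y)=n$, $i(\tilde f,x)=m$, $i(\tilde f,v)=\mathrm{ord}\,v(t)$, into the formula of Lemma~\ref{lm21}(i) gives, in the case $m<n$,
\[
\tilde\gamma(f)=\min\{i(f,f_x)-n+1,\;m^{2}-2m+B+1\},
\]
\[
m^{2}-m+\tilde\gamma(\tilde f)=\min\{m^{2}+A-n+1,\;m^{2}-2m+B+1\}.
\]
The second arguments are identically equal, and a short case split on the sign of $(m^{2}+A)-(m^{2}-2m+n+B)$ forces the first arguments to agree as well, so $\tilde\gamma(f)=m^{2}-m+\tilde\gamma(\tilde f)$, proving (i). The same split for $m=n$ with $n^{\ast}$ in place of $n-m$ yields only $\ge$, which is the general claim. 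For (ii), in the $m<n$ case $i(\tilde f,x)=m$ and $i(\tilde f,v)=n-m$; the equivalence ``$p\nmid m$ or $p\nmid n$'' $\Leftrightarrow$ ``$p\nmid m$ or $p\nmid(n-m)$'' is immediate since when $p\mid m$, $p\mid n\Leftrightarrow p\mid (n-m)$.

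\textbf{Main obstacle.} The only delicate point is the transverse-tangent case $m=n$: the strict transform does not naturally sit at the origin of the chart, so one must translate, and the resulting lower bound $n^{\ast}\ge 1$ is the weakest possible — enough to push through the inequality but not the equality, which is exactly why the hypothesis $i(f,x)\ne i(f,y)$ is needed in (i) and (ii). Everything else reduces, after the derivative identity above, to an elementary comparison of leading orders in $K[[t]]$.
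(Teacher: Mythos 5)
Your proof is correct and follows essentially the same route as the paper: both rest on the blow-up identity $f(x,y)=x^m\tilde f(x,y/x)$, the resulting derivative relations, and order counts along a parametrization $(x(t),v(t))$ of $\tilde f$, with part (ii) read off from $i(f,x)=i(\tilde f,x)$ and $i(f,y)=i(\tilde f,x)+i(\tilde f,v)$. The only execution differences are that the paper works with the generic combination $\alpha xf_x+(\alpha+\beta)yf_y=u^m(\alpha u\tilde f_u+\beta v\tilde f_v)$ straight from Definition \ref{def21}(2), which avoids your case split on orders, and it handles the tangential case $i(f,x)=i(f,y)$ by the shear of Lemma \ref{lm24} rather than by translating in the blown-up chart (where, note, the relevant order is $\mathrm{ord}(w(t)+c)=0$ rather than $n^{*}$, though your conclusion that only the inequality survives is right).
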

\begin{proof}
(i) If $f$ is not convenient then either $f=x\cdot u$ or $f=y\cdot u$ for some unit $u$ since $f$ is irreducible and hence the lemma is evident.

Assume now that $f$ is convenient and that $i(f,x)< i(f,y)$. Then the (local equation of) $\tilde{f}$ at the point $(1:0)$ in the first chart is: 
$$f(u,uv)=u^m \tilde{f}(u,v)$$
and therefore 
\begin{eqnarray*}
f_x(u,uv)+vf_y(u,uv)&=&m u^{m-1} \tilde{f}(u,v)+u^{m} \tilde{f_u}(u,v)\\
uf_y(u,uv)&=&u^{m}\tilde{f_v}(u,v).
\end{eqnarray*}
It yields 
\begin{eqnarray*}
xf_x(x,y)+yf_y(x,y)&=&m u^{m} \tilde{f}(u,v)+u^{m} \big (u\tilde{f_u}(u,v)\big )\\
yf_y(x,y)&=&u^{m} \big (v\tilde{f_v}(u,v)\big ),
\end{eqnarray*}
where $x=u, y=uv$.

Take a parametrization $(u(t),v(t))$ of $\tilde{f}$. Then 
\begin{eqnarray*}
x(t)&=&u(t)\\
y(t)&=&u(t)v(t)
\end{eqnarray*}
will be a parametrization of $f$ and 
\begin{eqnarray*}
x(t)f_x(x(t),y(t))+y(t)f_y(x(t),y(t))&=&u(t)^{m} \big (u(t)\tilde{f_u}(u(t),v(t))\big )\\
y(t)f_y(x(t),y(t))&=&u(t)^{m} \big (v(t)\tilde{f_v}(u(t),v(t))\big ).
\end{eqnarray*}
Thus
$$\alpha x(t)f_x(x(t),y(t))+(\alpha+\beta)y(t)f_y(x(t),y(t))=\alpha u(t)^{m}\tilde{f_u}(u(t),v(t))+\beta v(t)^{m}\tilde{f_v}(u(t),v(t)),$$
for $(\alpha:\beta)\in \mathbb P^1$ generic. It follows that
$$i\big(f,\alpha xf_x+(\alpha+\beta)yf_y\big)=m^2+i\big(\tilde f,\alpha u\tilde{f_u}+\beta v\tilde{f_v}\big)$$
since $\text{ord } u(t)=m$. Besides, 
\begin{eqnarray*}
i(f,x)+i(f,y)&=&\mathrm{ord }x(t)+\mathrm{ord }y(t)\\
&=&\mathrm{ord } u(t)+\mathrm{ord }u(t)+\mathrm{ord }v(t)\\
&=& m+\mathrm{ord }u(t)+\mathrm{ord }v(t)\\
&=& m+i(\tilde{f},u)+i(\tilde{f},v).
\end{eqnarray*}
Hence by Remark \ref{rm22}(c) we have
\begin{eqnarray*}
\tilde{\gamma}(f)&=&i(f,\alpha xf_x+(\alpha+\beta) yf_y)-i(f,x)-i(f,y)+1\\
&=& m^2-m+i(\tilde f,\alpha u\tilde{f_u}+\beta v\tilde{f_v})-i(\tilde{f},u)-i(\tilde{f},v)+1\\
&=& m^2-m+\tilde{\gamma} (\tilde{f}).
\end{eqnarray*}

(ii) follows from the equalities 
$$i(f,x)=\text{ord }x(t)=\text{ord } u(t)=i(\tilde{f},u)$$
and
$$i(f,y)=\text{ord }y(t)=\text{ord } u(t)+\text{ord } v(t)=i(\tilde{f},u)+i(\tilde{f},v).$$

In general, it is sufficient to prove $\tilde{\gamma}(f)\geq m^2-m+\tilde{\gamma} (\tilde{f})$ for provided $i(f,x)=i(f,y)$. Let $(\beta:\alpha)$ be the unique tangent direction of $f$ and $g\in K[[x,y]]$ such that $f(x,y)=g(x,\alpha x-\beta y)$. Then by Lemma \ref{lm24}, $i(g,x)<i(g,y)$ and $\tilde{\gamma}(f)\geq \tilde{\gamma}(g)$. It follows from (i) that 
$$\tilde{\gamma}(g)= m^2-m+\tilde{\gamma}(\tilde{g}),$$ where $\tilde{g}$ is the strict transform of $g$.

Besides, it is easy to see that the local equation of $\tilde{f}$ at the point $(\beta:\alpha)$ coincides with that of $\tilde{g}$ at the point $(1:0)$. This means that $\tilde{\gamma}(\tilde{f})=\tilde{\gamma}(\tilde{g})$. Hence
$$\tilde{\gamma}(f)\geq \tilde{\gamma}(g)\geq m^2-m+\tilde{\gamma}(\tilde{g})=m^2-m+\tilde{\gamma}(\tilde{f}).$$ 
\end{proof}
Note that the delta invariants admit an analogous property. More precisely,
\begin{remark}\label{rm23}
{\rm For each reduced (not necessary irreducible) plane curve singularity $f\in K[[x,y]]$ one may define the notion of strict transform $\tilde{f}$ of $f$ and get the following formula (cf. \cite{GLS06}, Chap. I, Prop. 3.34)
$$2\delta(f)=m^2-m+2\delta(\tilde{f}).$$
However a similar relation for kappa invariants is unknown.
}\end{remark}
\begin{theorem}\label{thm21}
Let $f\in K[[x,y]]$ be reduced. Then
$$\tilde{\gamma}(f)\geq 2\delta(f)-r(f)+1.$$
Equality holds if and only if the characteristic $p$ is im-good for $f$.
\end{theorem}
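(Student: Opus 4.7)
The plan is to reduce to the irreducible case and then induct on $\delta(f)$ using resolution of singularities one blow-up at a time.

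First I would use the branch decomposition $f = f_1 \cdots f_r$ together with Lemma \ref{lm21}(ii) and Remark \ref{rm22}: the cross terms $\sum_{j \neq i} i(f_i, f_j)$ and the offset $-r + 1$ cancel between $\tilde{\gamma}(f)$ and $2\delta(f) - r(f) + 1$, leaving
\begin{equation*}
\tilde{\gamma}(f) - \bigl(2\delta(f) - r(f) + 1\bigr) \;=\; \sum_{i=1}^{r} \bigl(\tilde{\gamma}(f_i) - 2\delta(f_i)\bigr).
\end{equation*}
Since ``$p$ im-good for $f$'' is defined branchwise in Definition \ref{def22}, both the inequality and its equality case reduce to proving: for every irreducible $h \in K[[x,y]]$ one has $\tilde{\gamma}(h) \geq 2\delta(h)$, with equality iff $p$ is im-good for $h$.

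Next I would prove this irreducible statement by induction on $\delta(h)$. The base $\delta(h) = 0$ is immediate: $h$ is smooth, so $\tilde{\gamma}(h) = 0 = 2\delta(h)$, and $p$ is im-good since one of $i(h,x), i(h,y)$ equals $1$. For the inductive step let $\tilde h$ denote the strict transform of $h$; it is irreducible and satisfies $\delta(\tilde h) = \delta(h) - \binom{m}{2} < \delta(h)$ whenever $m = \mathrm{mt}(h) \geq 2$. If $i(h,x) \neq i(h,y)$, then Lemma \ref{lm25}(i) together with Remark \ref{rm23} yields $\tilde{\gamma}(h) - 2\delta(h) = \tilde{\gamma}(\tilde h) - 2\delta(\tilde h)$, while Lemma \ref{lm25}(ii) transports the im-good condition between $h$ and $\tilde h$, so the inductive hypothesis closes this case.

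If instead $i(h,x) = i(h,y) = m$, I would invoke Lemma \ref{lm24} to pass to $g$ with $h(x,y) = g(x, \alpha x - \beta y)$: this $g$ is irreducible, satisfies $i(g,x) = m < i(g,y)$ (so the previous case applies to it), and has $\delta(g) = \delta(h)$ because it is right-equivalent to $h$. Chaining Lemma \ref{lm24}(ii) with that previous case gives $\tilde{\gamma}(h) \geq \tilde{\gamma}(g) \geq 2\delta(g) = 2\delta(h)$. For the equality direction I would split on whether $p$ is im-good for $h$: if it is not, either $p$ is im-good for $g$ and Lemma \ref{lm24}(iii) forces $\tilde{\gamma}(h) > \tilde{\gamma}(g) = 2\delta(h)$, or $p$ is not im-good for $g$ either and the previous case gives $\tilde{\gamma}(g) > 2\delta(g)$; in both situations strict inequality holds.

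I expect the main obstacle to be the remaining subcase, where $p$ is im-good for $h$ (hence $p \nmid m$, hence $p$ im-good for $g$) and I must promote $\tilde{\gamma}(g) = 2\delta(h)$ into $\tilde{\gamma}(h) = 2\delta(h)$ by showing $\tilde{\gamma}(h) = \tilde{\gamma}(g)$; note that Lemma \ref{lm24}(ii) only gives a one-sided bound. My intention is to re-open the proof of that lemma: since $p \nmid m$, a parametrization $(x(t), y(t))$ of $h$ has $\mathrm{ord}\,x'(t) = \mathrm{ord}\,y'(t) = m - 1$, and the relation $x'(t) h_x + y'(t) h_y = 0$ forces $i(h, h_x) = i(h, h_y)$, placing us in the first case of that proof and giving $\tilde{\gamma}(h) = i(g, g_y) - i(g,x) + 1$. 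A parallel order computation in the parametrization of $g$, using $p \nmid i(g,x) = m$, shows that the same expression attains the minimum in Lemma \ref{lm21}(i) for $g$, so $\tilde{\gamma}(g) = i(g, g_y) - i(g,x) + 1$ as well, and the equality $\tilde{\gamma}(h) = \tilde{\gamma}(g)$ then completes the induction.
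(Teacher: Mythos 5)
Your proposal is correct and follows essentially the same route as the paper: reduce to irreducible branches via Lemma \ref{lm21}(ii) and Remark \ref{rm22}, then induct on $\delta$ through the blow-up using Lemmas \ref{lm24} and \ref{lm25} and Remark \ref{rm23}, with the same case split on whether $i(f,x)=i(f,y)$. The only cosmetic difference is in the final subcase ($p$ im-good, $i(h,x)=i(h,y)$), where you re-derive $\tilde{\gamma}(h)=\tilde{\gamma}(g)$ by the parametrization order computation; the paper gets the same equality by citing Proposition \ref{pro22} (noting that im-goodness there forces $p\nmid m=\mathrm{mt}(h)$, i.e.\ m-goodness), whose proof is exactly that computation.
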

\begin{proof}
The proof will be divided into two steps\\
{\bf Step 1:} $f$ is irreducible. We argue by induction on the delta invariant of $f$. \\
$\bullet$ {\it Inequality:} If $\delta(f)=0$, i.e. $f$ is non-singular and then $\tilde{\gamma}(f)=0$. Suppose that $\delta(f)>0$ and the theorem is true for any $g$ satisfying $\delta(g)<\delta(f)$.  It follows from Remark \ref{rm23} that
$$\delta(f)=\frac{m(m-1)}{2}+\delta(\tilde{f})>\delta(\tilde{f}).$$
Applying the induction hypothesis to $\tilde{f}$ we obtain 
\begin{eqnarray*}
\tilde{\gamma} (f)&\geq & m^2-m+\tilde{\gamma} (\tilde{f})\\
&\geq& m^2-m+2 \delta(\tilde{f})\\
&=&2 \delta(f)
\end{eqnarray*}
due to Lemma \ref{lm25} and Remark \ref{rm23}. This proves the inequality of the theorem.\\
$\bullet$ {\it ``if'' statement:} Assume now that $p$ is im-good for $f$. We need to show that $\tilde{\gamma} (f)=2 \delta(f)$.

- If $i(f,x)\neq i(f,y)$ then $\tilde{\gamma}(f)=\tilde{\gamma}(\tilde{f})$ and $p$ is also im-good for $\tilde{f}$ by Lemma \ref{lm25}. By induction hypothesis, $\tilde{\gamma} (\tilde{f})= 2 \delta(\tilde{f})$. It hence follows from Lemma \ref{lm25} and Remark \ref{rm23} that
\begin{eqnarray*}
\tilde{\gamma} (f)& = & m^2-m+\tilde{\gamma} (\tilde{f})\\
&=& m^2-m+2 \delta(\tilde{f})\\
&=&2 \delta(f).
\end{eqnarray*}

- If $i(f,x)=i(f,y)$, then $i(f,x)=i(f,y)=m$ and therefore $m\neq 0$ (mod $p$) by assumption that $p$ is im-good for $f$. Take $g\in K[[x,y]]$ as in Lemma \ref{lm24} then $\tilde{\gamma}(f)=\tilde{\gamma}(g)$ by Proposition \ref{pro22} and $\delta(f)=\delta(g)$ by Remark \ref{rm21}. Applying induction hypothesis to the strict transform $\tilde{g}$ of $g$ gives $\tilde{\gamma}(\tilde{g})=2 \delta(\tilde{g})$. Combining Lemma \ref{lm25} and Remark \ref{rm23} we get
\begin{eqnarray*}
\tilde{\gamma} (f)=\tilde{\gamma}(g)& = & m^2-m+\tilde{\gamma} (\tilde{g})\\
&=& m^2-m+2 \delta(\tilde{g})\\
&=&2 \delta(g) =2\delta(f).
\end{eqnarray*}
This proves the sufficiency of the equality.\\
$\bullet$ {\it ``only if'' statement:} Finally, we will prove that $\tilde{\gamma}(f)>2\delta(f)$ if $p$ is not im-good for $f$ by induction on the delta invariant of $f$. Since $p$ is not im-good for $f$, $m\geq p$ and hence $\delta(f)\geq p(p-1)/2$. 

- If $\delta(f)= p(p-1)/2$, then $m=p$ and $\mathrm{mt}(\tilde f)=1$. We may write
$$f=f_p+f_{p+1}+\ldots,$$
where $f_p=(\alpha x-\beta y)^p$ with $\beta\neq 0$. We shall show that $\alpha\neq 0$. By contradiction, suppose that $\alpha=0$. Then $i(f,y)>p=i(f,x)$. Besides $i(f,y)=0$ (mod $p$) since $p$ is not im-good for $f$ and therefore $i(f,y)\geq 2p$. Thus it is easy to see that $\mathrm{mt}(\tilde f)\geq p>1$, which is a contradiction and hence $\alpha\neq 0$. Take $g\in K[[x,y]]$ as in Lemma \ref{lm24}, then $p=i(g,x)<i(g,y)$.

On the other hand, $\tilde{g}$ must be non-singular (i.e. $\mathrm{mt}(\tilde g)=1$) since 
$$p(p-1)/2=\delta(f)=\delta(g)=p(p-1)/2+\delta(\tilde{g}).$$
This implies that $i(\tilde{g},v)=1$. Hence
$$i(g,y)=i(\tilde{g},v)+i(g,x)=p+1.$$
Consequently, $p$ is im-good for $g$ and therefore $\tilde{\gamma}(f)>\tilde{\gamma}(g)$ by Lemma \ref{lm24}. Applying the first part to $g$ we have $\tilde{\gamma}(g)\geq 2 \delta(g)$ and hence
$$\tilde{\gamma}(f)>\tilde{\gamma}(g)\geq 2 \delta(g)=2\delta(f).$$

- Now we prove the induction step. Assume that $\delta(f)>p(p-1)/2$. If $i(f,x)\neq i(f,y)$ then $p$ is not im-good for $\tilde{f}$ by Lemma \ref{lm25} since it is not im-good for $f$. We can apply the induction hypothesis to $\tilde{f}$ and obtain
\begin{eqnarray*}
\tilde{\gamma}(f)&=&m(m-1)+\tilde{\gamma}(\tilde{f})\\
&>& m(m-1)+2\delta(\tilde{f})\\
&=& 2\delta(f).
\end{eqnarray*}

Assume that $i(f,x)= i(f,y)$. Take $g\in K[[x,y]]$ as in Lemma \ref{lm24}. If $p$ is not im-good for $g$, since $i(g,x)\neq i(g,y)$, we may apply the above argument, with $f$ replaced by $g$, to obtain $\tilde{\gamma}(g)>2\delta(g)$ and hence
$$\tilde{\gamma}(f)\geq\tilde{\gamma}(g)>2\delta(g)=2 \delta(f),$$
where equalities follow from Lemma \ref{lm24} and Remark \ref{rm21}. If $p$ is im-good for $g$, then $\tilde{\gamma}(f)>\tilde{\gamma}(g)$ by Lemma \ref{lm24} and therefore
$$\tilde{\gamma}(f)>\tilde{\gamma}(g)\geq 2\delta(g)=2 \delta(f).$$
This proves the first step.\\
{\bf Step 2:} Assume that $f$ decomposes into its branches $f=f_1\cdot\ldots\cdot f_r$. Then 
$$\tilde{\gamma} (f)=\sum_{i=1}^r\big ( \tilde{\gamma}(f_i)+\sum_{j\neq i} i(f_i,f_j)\big )-r+1$$
and
$$2\delta (f)=\sum_{i=1}^r\big ( 2\delta(f_i)+\sum_{j\neq i} i(f_i,f_j)\big ).$$
The proposition follows from the above equalities and Step 1.
\end{proof}
\begin{corollary}\label{coro21}
Assume that $p$ is im-good for $f$. Then
$$\gamma(f)=\tilde{\gamma}(f).$$
\end{corollary}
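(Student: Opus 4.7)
The plan is to deduce this immediately from Theorem \ref{thm21}, exploiting the fact that $\delta(f)$ and $r(f)$ are contact (hence coordinate-change) invariants while the inequality $\tilde{\gamma}_{X,Y}(f) \geq 2\delta(f) - r(f) + 1$ holds in \emph{every} coordinate system, with equality characterizing im-goodness in those coordinates.

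First, the inequality $\gamma(f) \leq \tilde{\gamma}(f)$ is built into the definition of $\gamma$ as the minimum over all coordinates of $\tilde{\gamma}_{X,Y}$, as noted in Remark \ref{rm21}(b). So the content of the corollary is the reverse inequality $\tilde{\gamma}(f) \leq \gamma(f)$ under the assumption that $p$ is im-good for $f$ in the fixed coordinates $\{x,y\}$.

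By Theorem \ref{thm21} applied in the coordinates $\{x,y\}$, the im-good hypothesis gives
\[
\tilde{\gamma}(f) = \tilde{\gamma}_{x,y}(f) = 2\delta(f) - r(f) + 1.
\]
Now for any other coordinate system $\{X,Y\}$, let $g$ be the power series representing $f$ in those coordinates (so $g \sim_r f$). Then $\tilde{\gamma}_{X,Y}(f) = \tilde{\gamma}(g)$, and by Theorem \ref{thm21} applied to $g$,
\[
\tilde{\gamma}(g) \geq 2\delta(g) - r(g) + 1 = 2\delta(f) - r(f) + 1,
\]
where the last equality uses Remark \ref{rm21}(e) (invariance of $\delta$ and $r$ under right equivalence). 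Taking the minimum over all coordinate systems yields
\[
\gamma(f) \geq 2\delta(f) - r(f) + 1 = \tilde{\gamma}(f),
\]
which finishes the proof.

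There is essentially no obstacle: Theorem \ref{thm21} has done all the work. The only thing to keep straight is the distinction between the coordinate-dependent invariant $\tilde{\gamma}_{X,Y}$ and the true invariant $\gamma$, and to observe that the lower bound $2\delta - r + 1$ provided by Theorem \ref{thm21} is intrinsic (coordinate-free) while $\tilde{\gamma}(f)$ already attains it by hypothesis, forcing the minimum to equal $\tilde{\gamma}(f)$.
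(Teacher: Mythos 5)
Your proof is correct and is essentially the paper's own argument: both deduce from Theorem \ref{thm21} that $2\delta(f)-r(f)+1$ is a coordinate-free lower bound for $\tilde{\gamma}$ in any coordinate system (using the right-equivalence invariance of $\delta$ and $r$ from Remark \ref{rm21}), and that the im-good hypothesis forces $\tilde{\gamma}(f)$ to attain this bound, squeezing $\gamma(f)=\tilde{\gamma}(f)$. The only cosmetic difference is that the paper works with a single $g$ realizing the minimum while you bound all coordinate systems uniformly before taking the minimum.
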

\begin{proof}
Let $g$ be right equivalent to $f$ such that $\gamma(f)=\tilde{\gamma}(g)$. It then follows from Theorem \ref{thm21} and Remark \ref{rm21} that
\begin{eqnarray*}
\tilde{\gamma}(f)\geq \gamma(f) =\tilde{\gamma}(g)&\geq & 2\delta(g)-r(g)+1 = 2\delta(f)-r(f)+1 =\tilde{\gamma}(f),
\end{eqnarray*}
and hence $\gamma(f)=\tilde{\gamma}(f)$.
\end{proof}
The following simple corollary should be useful in computation, since the number in the left side is easily computed. 
\begin{corollary}\label{coro22}
Assume that $p>\mathrm{mt}(f)$. Then
$$\mu(f)-\tilde{\gamma}(f)=\mathrm{Sw}(f).$$
\end{corollary}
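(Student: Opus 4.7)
The plan is to combine Theorem~\ref{thm21} with Deligne's identity $\mu(f) = 2\delta(f)-r(f)+1 + \mathrm{Sw}(f)$ quoted in the introduction. All that is required is to verify that the hypothesis $p > \mathrm{mt}(f)$ is already strong enough to force the characteristic to be im-good for $f$ in the ambient coordinates $\{x,y\}$, so that equality holds in Theorem~\ref{thm21}.

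First, I would reduce to the branches. Write $f = f_1\cdots f_r$ with each $f_i$ irreducible; since $\mathrm{mt}(f) = \sum_i \mathrm{mt}(f_i)$, the hypothesis gives $\mathrm{mt}(f_i) < p$ for every $i$. I would then use the fact (footnote in the definition following Lemma~\ref{lm24}, and a basic property of the intersection multiplicity) that an irreducible plane curve singularity has a unique tangent direction, and that $i(f_i,\ell) = \mathrm{mt}(f_i)$ whenever the linear form $\ell$ is not a tangent of $f_i$. Since at most one of the two coordinates $x,y$ can be the tangent of $f_i$, at least one of $i(f_i,x)$ and $i(f_i,y)$ equals $\mathrm{mt}(f_i)$; by the previous sentence this value lies strictly between $0$ and $p$, hence is nonzero modulo $p$. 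This holds for every branch simultaneously, so by Definition~\ref{def22} the characteristic $p$ is im-good for $f$.

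Having established im-goodness, Theorem~\ref{thm21} yields the equality
\begin{equation*}
\tilde{\gamma}(f) = 2\delta(f) - r(f) + 1.
\end{equation*}
Subtracting this from Deligne's formula $\mu(f) = 2\delta(f) - r(f) + 1 + \mathrm{Sw}(f)$ recalled in the introduction gives $\mu(f) - \tilde{\gamma}(f) = \mathrm{Sw}(f)$, as required.

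The argument is essentially a two-line deduction from results already in place, so there is no serious obstacle; the only point that deserves care is the passage from the global bound $p > \mathrm{mt}(f)$ to the branch-wise bound $p > \mathrm{mt}(f_i)$ together with the uniqueness of the tangent of an irreducible branch, which is what makes at least one of the two coordinate intersection numbers coincide with $\mathrm{mt}(f_i)$ and thus be visibly coprime to $p$.
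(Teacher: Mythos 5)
Your proof is correct and is exactly the deduction the paper intends (the corollary is stated there without proof): the hypothesis $p>\mathrm{mt}(f)$ forces $p$ to be im-good for $f$, so Theorem \ref{thm21} gives $\tilde{\gamma}(f)=2\delta(f)-r(f)+1$, and subtracting this from Deligne's formula $\mu(f)=2\delta(f)-r(f)+1+\mathrm{Sw}(f)$ finishes the argument. The only streamlining available is that $p>\mathrm{mt}(f)$ immediately makes $p$ m-good for $f$, and the paper already records the implication ``m-good'' $\Longrightarrow$ ``im-good'' right after Definition \ref{def22}, so your tangent-direction argument re-proves a step you could simply have cited.
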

\begin{theorem}\label{thm22}
Let $f\in K[[x,y]]$ be reduced. Then
$$\gamma(f)\geq 2\delta(f)-r(f)+1.$$
Equality holds if and only if the characteristic $p$ is right im-good for $f$.
\end{theorem}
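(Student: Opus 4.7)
The strategy is to reduce Theorem \ref{thm22} directly to Theorem \ref{thm21} together with the right-equivalence invariance of $\delta$ and $r$ recorded in Remark \ref{rm21}(e). Concretely, unravelling Definition \ref{def211}, the value $\gamma(f)$ equals $\tilde{\gamma}(\Phi(f))$ for an automorphism $\Phi$ realising the optimal coordinate change, so $\gamma(f) = \min\{\tilde{\gamma}(g) : g \sim_r f\}$. Since the gamma invariants are non-negative integers and (by the inequality we are about to prove) bounded below, this minimum is attained; this is exactly the legitimacy of the choice made in the proof of Corollary \ref{coro21}.

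For the inequality, I would fix $g \sim_r f$ with $\gamma(f) = \tilde{\gamma}(g)$. Applying Theorem \ref{thm21} to $g$ and using Remark \ref{rm21}(e) (which gives $\delta(g)=\delta(f)$ and $r(g)=r(f)$, since right equivalence is a special case of contact equivalence) yields
\begin{eqnarray*}
\gamma(f) \;=\; \tilde{\gamma}(g) \;\geq\; 2\delta(g) - r(g) + 1 \;=\; 2\delta(f) - r(f) + 1.
\end{eqnarray*}

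For the equality characterisation, the ``if'' direction goes as follows: if $p$ is right im-good for $f$, then by Definition \ref{def22} there exists $h \sim_r f$ for which $p$ is im-good. The equality case of Theorem \ref{thm21} gives $\tilde{\gamma}(h) = 2\delta(h) - r(h) + 1 = 2\delta(f) - r(f) + 1$, and since $\gamma(f) \leq \tilde{\gamma}(h)$ by definition of $\gamma$, combining with the inequality just proved forces $\gamma(f) = 2\delta(f) - r(f) + 1$. Conversely, if equality holds, then the minimiser $g$ from the first paragraph satisfies $\tilde{\gamma}(g) = 2\delta(g) - r(g) + 1$; the equality clause of Theorem \ref{thm21} then says $p$ is im-good for $g$, so by definition $p$ is right im-good for $f$. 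There is no genuine obstacle: the entire content is packed into Theorem \ref{thm21}, and the only subtlety is the existence of a minimiser, which is handled by the integrality of $\tilde{\gamma}$.
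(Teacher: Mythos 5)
Your proposal is correct and follows essentially the same route as the paper: pick a minimiser $g\sim_r f$ with $\gamma(f)=\tilde{\gamma}(g)$, apply Theorem \ref{thm21} together with Remark \ref{rm21}(e) for the inequality and the ``only if'' direction, and use an im-good representative $h\sim_r f$ with $\gamma(f)\leq\tilde{\gamma}(h)$ for the ``if'' direction. Your extra remark justifying the existence of the minimiser (integrality plus the uniform lower bound from Theorem \ref{thm21}) is a small point the paper leaves implicit, but it does not change the argument.
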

\begin{proof}
Taking $g$ right equivalent to $f$ such that $\gamma(f)=\tilde{\gamma}(g)$ and combining Theorem \ref{thm21} and Remark \ref{rm21} we get
$$\gamma(f)=\tilde{\gamma}(g)\geq 2\delta(g)-r(g)+1=2\delta(f)-r(f)+1$$
with equality if and only if $p$ is im-good for $g$. It remains to show that if $p$ is right im-good for $f$, then 
$$\gamma(f)=2\delta(f)-r(f)+1.$$
Indeed, by definition, $p$ is im-good for some $h$ right equivalent to $f$. Again combining Theorem \ref{thm21} and Remark \ref{rm21} we get
$$\gamma(f)=\gamma(h)\leq \tilde{\gamma}(h)= 2\delta(h)-r(h)+1=2\delta(f)-r(f)+1\leq \gamma(f).$$
This implies that
$$\gamma(f)=2\delta(f)-r(f)+1,$$
which completes the theorem.
\end{proof}
\section{\bf Kappa invariants and Pl\"ucker formulas}\label{sec3}
We prove in this section the main result (Theorem \ref{thm31}) and apply it to Pl\"ucker formulas (Corollaries \ref{coro34}, \ref{coro35}). Furthermore we show, in Corollary \ref{coro33} (resp. Corollary \ref{coro35}), that if $p$ is ``big'' for $f$ (resp. for a plane curve $C$), then $f$ (resp. $C$) has no wild vanishing cycle.
\begin{theorem}\label{thm31}
Let $f\in K[[x,y]]$ be reduced. One has  
$$\kappa(f)\geq 2\delta(f)+\mathrm{mt}(f)-r(f)$$
with equality if and only if $p$ is m-good for $f$.
\end{theorem}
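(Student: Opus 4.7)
The approach is to chain the two principal inequalities of Section~\ref{sec2}: Proposition~\ref{pro21} gives $\kappa(f) - \mathrm{mt}(f) + 1 \geq \tilde{\gamma}(f)$ and Theorem~\ref{thm21} gives $\tilde{\gamma}(f) \geq 2\delta(f) - r(f) + 1$. Concatenating these yields the inequality $\kappa(f) \geq 2\delta(f) + \mathrm{mt}(f) - r(f)$, so the substantive content lies in characterising equality.

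For the ``if'' direction, assume $p$ is m-good for $f$. Because every branch $f_i$ is irreducible with a unique tangent direction, $\mathrm{mt}(f_i) = \min\{i(f_i,x),i(f_i,y)\}$, so $p \nmid \mathrm{mt}(f_i)$ implies $p$ is im-good for $f_i$, hence for $f$. Theorem~\ref{thm21} then upgrades the lower bound to $\tilde{\gamma}(f) = 2\delta(f) - r(f) + 1$, while Proposition~\ref{pro21} (its ``equality if $p$ is m-good'' clause) upgrades the upper bound to $\tilde{\gamma}(f) = \kappa(f) - \mathrm{mt}(f) + 1$; substituting gives $\kappa(f) = 2\delta(f) + \mathrm{mt}(f) - r(f)$.

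For the ``only if'' direction, the additivity formulas of Remark~\ref{rm22} (together with $\mathrm{mt}(f) = \sum_i \mathrm{mt}(f_i)$ and $r(f) = r$) give the identity
$$\kappa(f) - 2\delta(f) - \mathrm{mt}(f) + r(f) \;=\; \sum_{i=1}^{r} \bigl(\kappa(f_i) - 2\delta(f_i) - \mathrm{mt}(f_i) + 1\bigr),$$
with every summand on the right nonnegative by the irreducible case of the inequality already proved. It therefore suffices to show, for an irreducible $f$ with $p \mid m := \mathrm{mt}(f)$, the strict inequality $\kappa(f) > 2\delta(f) + m - 1$. If $p$ is not im-good for $f$, the ``only if'' clause of Theorem~\ref{thm21} gives $\tilde{\gamma}(f) > 2\delta(f)$, whence $\kappa(f) \geq \tilde{\gamma}(f) + m - 1 > 2\delta(f) + m - 1$. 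If $p$ is im-good for $f$, set $m_x = i(f,x)$ and $m_y = i(f,y)$ with WLOG $m = m_x \leq m_y$; the hypothesis forces $p \nmid m_y$, hence $m_y > m$. Differentiating $f(x(t),y(t))=0$ for a parametrisation yields
$$i(f,f_x) + \mathrm{ord}\, x'(t) = i(f,f_y) + \mathrm{ord}\, y'(t),$$
with $\mathrm{ord}\, x'(t) \geq m$ (since $p\mid m$) and $\mathrm{ord}\, y'(t) = m_y - 1$ (since $p \nmid m_y$), which gives $i(f,f_x) - i(f,f_y) \leq m_y - m - 1$. A short case analysis comparing the formula $\tilde{\gamma}(f) = \min\{i(f,f_x) - m_y + 1,\; i(f,f_y) - m_x + 1\}$ from Lemma~\ref{lm21}(i) against $\kappa(f) - m + 1 = \min\{i(f,f_x),\, i(f,f_y)\} - m + 1$ shows that the Proposition~\ref{pro21} bound is strict, i.e.\ $\tilde{\gamma}(f) < \kappa(f) - m + 1$. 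Combined with $\tilde{\gamma}(f) = 2\delta(f)$ coming from im-goodness via Theorem~\ref{thm21}, this gives $\kappa(f) > 2\delta(f) + m - 1$.

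The main obstacle is precisely the last subcase ($p$ im-good but not m-good for an irreducible branch): neither Proposition~\ref{pro21} nor Theorem~\ref{thm21} can be invoked as an ``iff'' to conclude strict inequality, so one has to open up the proof of Proposition~\ref{pro21} and read off strictness directly from the parametrisation via the chain-rule identity above.
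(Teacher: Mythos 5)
Your proof is correct, and its skeleton (chain Proposition~\ref{pro21} with Theorem~\ref{thm21} for the inequality and the ``if'' direction, reduce the ``only if'' direction to a single irreducible branch via the additivity formulas, dispose of the not-im-good subcase by the strict inequality in Theorem~\ref{thm21}) coincides with the paper's. Where you genuinely diverge is the remaining hard subcase, an irreducible branch for which $p$ is im-good but $p\mid m$: the paper performs the coordinate change $g(x,y)=f(x,y-x)$ and invokes Lemma~\ref{lm24}(iii) with the roles of $f$ and $g$ swapped to get $\tilde{\gamma}(g)>\tilde{\gamma}(f)$, then applies Proposition~\ref{pro21} to $g$ and contact invariance of $\kappa$; you instead prove directly that the bound of Proposition~\ref{pro21} is strict for $f$ itself, i.e.\ $\tilde{\gamma}(f)<\kappa(f)-m+1$, via the chain-rule identity $i(f,f_x)+\mathrm{ord}\,x'(t)=i(f,f_y)+\mathrm{ord}\,y'(t)$ together with $\mathrm{ord}\,x'(t)\geq m$ (as $p\mid m$) and $\mathrm{ord}\,y'(t)=m_y-1$ (as $p\nmid m_y$), which gives $i(f,f_x)-m_y+1\leq i(f,f_y)-m$ and makes both branches of the $\min$ in Lemma~\ref{lm21}(i) strictly smaller than $\min\{i(f,f_x),i(f,f_y)\}-m+1$; I checked both cases of your comparison and they close. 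The underlying computation is the same one that powers Lemma~\ref{lm24}(iii), so the two routes are cousins, but yours is more self-contained (no coordinate change, no appeal to contact invariance of $\kappa$, and it localizes the strictness in Proposition~\ref{pro21} rather than in Theorem~\ref{thm21} applied to an auxiliary $g$), at the cost of reopening the parametrization argument; the paper's version buys reusability of Lemma~\ref{lm24}(iii), which it also needs inside the proof of Theorem~\ref{thm21}. The only loose ends in your write-up, shared with the paper, are the degenerate cases $f_x\equiv 0$ or $f\mid f_y$ in the chain-rule step, which are excluded here since $y'(t)\neq 0$ and $f$ is reduced.
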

\begin{proof}
Combining Proposition \ref{pro21} and Theorem \ref{thm21} we get
$$\kappa(f)\geq \tilde{\gamma}(f)+\mathrm{mt}(f)-1\geq 2\delta(f)+\mathrm{mt}(f)-r(f),$$
with equalities if $p$ is m-good for $f$. It then remains to prove that if $p$ is not m-good for $f$ then  
$$\kappa(f)> 2\delta(f)+\mathrm{mt}(f)-r(f).$$
It suffices to prove the inequality for $p$ which is im-good for $f$, since otherwise we have 
$$\tilde{\gamma}(f)> 2\delta(f)-r(f)+1$$
due to Theorem \ref{thm21}, and hence
$$\kappa(f)> 2\delta(f)+\mathrm{mt}(f)-r(f).$$
Suppose that $p$ is im-good for $f$. By Remark \ref{rm22} we may assume that $f$ is irreducible. Without loss of generality we may assume further that $i(f,x)\leq i(f,y)$. Then $m:=\mathrm{mt}(f)=i(f,x)$ and therefore $m=0$, $i(f,y)\neq 0$ (mod $p$) since $p$ is not m-good but im-good for $f$. It yields that $i(f,x)< i(f,y)$. Putting $g(x,y)=f(x,y-x)$ and applying Lemma \ref{lm24} with replacing the role of $f$ and $g$ we obtain that $\tilde{\gamma}(g)>\tilde{\gamma}(f)$. Hence
\begin{eqnarray*}
\kappa(f)=\kappa(g)&\geq& \tilde{\gamma}(g)+\mathrm{mt}(g)-1\\
&>& \tilde{\gamma}(f)+\mathrm{mt}(f)-1\\
&\geq & 2\delta(f)+\mathrm{mt}(f)-r(f)
\end{eqnarray*}
by combining Remark \ref{rm21}, Proposition \ref{pro21} and Theorem \ref{thm21}.
\end{proof}
The following interesting corollary says that if the characteristic $p$ is ``big'' for $f$, then $f$ has no wild vanishing cycle. 
\begin{corollary}\label{coro33}
Assume that $p>\kappa(f)$. Then $f$ has no wild vanishing cycle, i.e. $\mathrm{Sw}(f)=0$. Moreover one has 
\begin{eqnarray*}
\kappa(f)&=& 2\delta(f)+\mathrm{mt}(f)-r(f)\\
&=&\mu(f)+\mathrm{mt}(f)-1.
\end{eqnarray*} 
\end{corollary}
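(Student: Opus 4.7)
The plan is to leverage $p>\kappa(f)$ to force $p$ to be multiplicity-good for $f$, then apply Theorem \ref{thm31} for the first equality, and use Corollary \ref{coro22} together with Deligne's formula $\mu(f)=2\delta(f)-r(f)+1+\mathrm{Sw}(f)$ to derive the Swan vanishing and hence the second equality.

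First I would verify that $p\nmid\mathrm{mt}(f_i)$ for every branch $f_i$. If $f_i$ is smooth then $\mathrm{mt}(f_i)=1<p$ is automatic. If $f_i$ is singular and irreducible, the unconditional inequality of Theorem \ref{thm31} applied to $f_i$ yields
$$\kappa(f_i)\geq 2\delta(f_i)-1+\mathrm{mt}(f_i)\geq \mathrm{mt}(f_i)+1,$$
using $\delta(f_i)\geq 1$ for a singular branch. Since Remark \ref{rm22} gives $\kappa(f)\geq \kappa(f_i)$, this forces $\mathrm{mt}(f_i)\leq \kappa(f)-1<p$. Hence $p$ is m-good for $f$, and Theorem \ref{thm31} upgrades to the equality $\kappa(f)=2\delta(f)+\mathrm{mt}(f)-r(f)$.

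For the second equality I observe that $p>\kappa(f)\geq \mathrm{mt}(f)$, the latter inequality following from Proposition \ref{pro21} combined with the bound $\tilde{\gamma}(f)\geq 1$ valid for singular $f$ (for a singular irreducible branch $\tilde{\gamma}\geq 2\delta\geq 2$, while for a multi-branch configuration the cross terms $i(f_i,f_j)\geq 1$ already force $\tilde{\gamma}\geq (r-1)^2\geq 1$). Corollary \ref{coro22} then gives $\mathrm{Sw}(f)=\mu(f)-\tilde{\gamma}(f)$, while Theorem \ref{thm21} (under im-goodness, which follows from m-goodness) gives $\tilde{\gamma}(f)=2\delta(f)-r(f)+1$. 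Granting $\mathrm{Sw}(f)=0$, Deligne's formula becomes $\mu(f)=2\delta(f)-r(f)+1$, and substituting into the first equality rewrites it as $\kappa(f)=\mu(f)+\mathrm{mt}(f)-1$, the second conclusion.

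The main obstacle is establishing $\mathrm{Sw}(f)=0$. The identities extracted from Corollary \ref{coro22}, Theorem \ref{thm21} and Deligne's formula are mutually consistent with any non-negative value of $\mathrm{Sw}(f)$, so a separate input that activates the strict hypothesis $p>\kappa(f)$ is required. I expect this input to be a ramification-theoretic bound asserting that a positive Swan contribution from a branch forces $p$ to divide some intersection-theoretic invariant dominated by $\kappa(f)$; combined with the non-negativity of $\mathrm{Sw}$ and the estimate $\mathrm{Sw}(f)=\mu(f)-\tilde{\gamma}(f)$ from Corollary \ref{coro22}, such a bound would pin down $\mathrm{Sw}(f)=0$ and close the argument.
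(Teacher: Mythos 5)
Your first half is fine: forcing $p$ to be m-good for $f$ and invoking Theorem \ref{thm31} to get $\kappa(f)=2\delta(f)+\mathrm{mt}(f)-r(f)$ is exactly what the paper does (the paper gets m-goodness more cheaply from $\kappa(f)\geq\mathrm{mt}(f)$, but your branch-by-branch argument reaches the same place). The problem is the second half. You correctly diagnose that Corollary \ref{coro22}, Theorem \ref{thm21} and Deligne's formula only reproduce the tautology $\mathrm{Sw}(f)=\mathrm{Sw}(f)$, and then you stop at ``I expect this input to be a ramification-theoretic bound\dots''. That expected input is precisely the content of the corollary, so what you have written is not a proof but a statement of what remains to be proved.

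The paper closes the gap in the opposite logical order: it does not prove $\mathrm{Sw}(f)=0$ first and deduce the second equality, but proves the second equality $\kappa(f)=\mu(f)+\mathrm{mt}(f)-1$ directly and then obtains $\mathrm{Sw}(f)=0$ by comparing the two expressions for $\kappa(f)$ with Deligne's formula. Concretely: choose a generic linear change of coordinates $g(x,y)=f(\alpha x+ay,\beta x+by)$ so that $\kappa(f)=i(g,g_x)$ and $i(g,x)=i(g,y)=\mathrm{mt}(g)$; factor the polar $g_x=g_1\cdots g_s$ and parametrize each branch by $(x_i(t),y_i(t))$. The hypothesis $p>\kappa(f)\geq\mathrm{ord}\,g(x_i(t),y_i(t))$ guarantees that differentiating $g(x_i(t),y_i(t))$ with respect to $t$ lowers the order by exactly $1$ (no leading coefficient is killed by the characteristic), and likewise $p>\mathrm{mt}(g)>\mathrm{ord}\,y_i(t)$ controls $\frac{d}{dt}y_i(t)$. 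Using the chain rule together with $g_x(x_i(t),y_i(t))=0$, this yields
\begin{equation*}
\mathrm{ord}\,g(x_i(t),y_i(t))=\mathrm{ord}\,g_y(x_i(t),y_i(t))+\mathrm{ord}\,y_i(t),
\end{equation*}
and summing over the branches of $g_x$ gives $i(g,g_x)=i(g_x,g_y)+i(g_x,y)=\mu(g)+\mathrm{mt}(g)-1$. This explicit polar-curve computation is the step that ``activates'' $p>\kappa(f)$; without it, or some equivalent substitute, your argument does not establish either $\mathrm{Sw}(f)=0$ or the second equality.
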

\begin{proof}
Clearly
$$\kappa(f)\geq\mathrm{mt}(f)$$
and then $p>\mathrm{mt}(f)$. Therefore $p$ is m-good for $f$ and then 
$$\kappa(f)= 2\delta(f)+\mathrm{mt}(f)-r(f),$$
due to Theorem \ref{thm31}. It thus suffices to show that 
$$\kappa(f)=\mu(f)+\mathrm{mt}(f)-1.$$
Indeed, take $(\alpha:\beta), (a:b)\in \mathbb P^1$ such that $\alpha\cdot b-\beta\cdot a\neq 0$ and that
$$\kappa(f)=i(f,\alpha f_x+\beta f_y),\text{ and } i(g,x)=i(g,y)=\mathrm{mt}(g)=\mathrm{mt}(f)$$
with $g(x,y):=f(\alpha x+ a y, \beta x+b y)$. Let $g_x=g_1\cdot\ldots\cdot g_s$ with $g_i$ irreducible and let $(x_i(t),y_i(t))$ be a parametrization of $g_i$. Since 
$$p>\kappa(f)\geq\mathrm{ord}\left(g\left(x_i(t),y_i(t)\right)\right) \text{ and } p>\mathrm{mt}(g)>\mathrm{ord}\left( y_i(t)\right),$$
it yields that
\begin{eqnarray*}
\mathrm{ord}\left(g\left(x_i(t),y_i(t)\right)\right)&=&\mathrm{ord}\left(\frac{d}{dt} g\left(x_i(t),y_i(t)\right)\right)+1\\
&=&\mathrm{ord}\left(g_y\left(x_i(t),y_i(t)\right)\right)+\mathrm{ord}\left(\frac{d}{dt} y_i(t)\right) +1\\
&=& \mathrm{ord}\left(g_y\left(x_i(t),y_i(t)\right)\right)+\mathrm{ord}\left( y_i(t)\right).
\end{eqnarray*}
This implies, by the additivity of intersection multiplicities, that
\begin{eqnarray*}
i(g,g_x)&=&i(g_x,g_y)+i(g_x,y)\\
&=& \mu(g)+\mathrm{mt}(g)-1.
\end{eqnarray*}
Hence, by Remark \ref{rm21},
\begin{eqnarray*}
\kappa(f)=i(f,\alpha f_x+\beta f_y)&=&i(g,g_x)\\
&=& \mu(g)+\mathrm{mt}(g)-1\\
&=& \mu(f)+\mathrm{mt}(f)-1,
\end{eqnarray*}
which finishes the corollary.
\end{proof}
Let $C$ be a irreducible curve of degree $d$ in $\mathbb P^2$ defined by a homogeneous polynomial $F\in K[x,y,z]$. Let $\mathrm{Sing}(C)$ resp. $C^{*}:=C\setminus \mathrm{Sing}(C)$ the singular resp. smooth locus of $C$, and let $s(C):=\sharp \mathrm{Sing}(C)$ the number of singular points. Let $\rho\colon C^{*}\to \check{\mathbb P}^2, P=(x\colon y\colon z)\mapsto \left(F_{x}(P)\colon F_{y}(P)\colon F_{z}(P)\right)$ the dual (Gauss) map and $\deg(\rho)$ its degree. We call the closure of the image of $\rho$ in $\check{\mathbb P}^2$ the {\em dual curve} of $C$ denoted by $\check{C}$. We denote by $\check{d}$ the degree of $\check{C}$. For each singular point $P\in \mathrm{Sing}(C)$ take a local equation $f_P=0$ of $C$ at $P$, and define
\begin{displaymath}
\begin{array}{lclcll}
\delta(C)&:=& \sum\delta(f_P), & \mathrm{mt}(C)&:=& \sum\mathrm{mt}(f_P),\\
\mu(C)&:=& \sum\mu(f_P), & r(C)&:=& \sum r(f_P),\\
\mathrm{Sw}(C)&:=& \sum \mathrm{Sw}(f_P). &
\end{array}
\end{displaymath}
where all the sums are taken over $P\in \mathrm{Sing}(C)$.
\begin{proof}
Clearly
$$\kappa(f)\geq\mathrm{mt}(f)$$
and then $p>\mathrm{mt}(f)$. Therefore $p$ is m-good for $f$ and then 
$$\kappa(f)= 2\delta(f)+\mathrm{mt}(f)-r(f),$$
due to Theorem \ref{thm31}. It thus suffices to show that 
$$\kappa(f)=\mu(f)+\mathrm{mt}(f)-1.$$
Indeed, take $(\alpha:\beta), (a:b)\in \mathbb P^1$ such that $\alpha\cdot b-\beta\cdot a\neq 0$ and that
$$\kappa(f)=i(f,\alpha f_x+\beta f_y),\text{ and } i(g,x)=i(g,y)=\mathrm{mt}(g)=\mathrm{mt}(f)$$
with $g(x,y):=f(\alpha x+ a y, \beta x+b y)$. Let $g_x=g_1\cdot\ldots\cdot g_s$ with $g_i$ irreducible and let $(x_i(t),y_i(t))$ be a parametrization of $g_i$. Since 
$$p>\kappa(f)\geq\mathrm{ord}\left(g\left(x_i(t),y_i(t)\right)\right) \text{ and } p>\mathrm{mt}(g)>\mathrm{ord}\left( y_i(t)\right),$$
it yields that
\begin{eqnarray*}
\mathrm{ord}\left(g\left(x_i(t),y_i(t)\right)\right)&=&\mathrm{ord}\left(\frac{d}{dt} g\left(x_i(t),y_i(t)\right)\right)+1\\
&=&\mathrm{ord}\left(g_y\left(x_i(t),y_i(t)\right)\right)+\mathrm{ord}\left(\frac{d}{dt} y_i(t)\right) +1\\
&=& \mathrm{ord}\left(g_y\left(x_i(t),y_i(t)\right)\right)+\mathrm{ord}\left( y_i(t)\right).
\end{eqnarray*}
This implies, by the additivity of intersection multiplicities, that
\begin{eqnarray*}
i(g,g_x)&=&i(g_x,g_y)+i(g_x,y)\\
&=& \mu(g)+\mathrm{mt}(g)-1.
\end{eqnarray*}
Hence, by Remark \ref{rm21},
\begin{eqnarray*}
\kappa(f)=i(f,\alpha f_x+\beta f_y)&=&i(g,g_x)\\
&=& \mu(g)+\mathrm{mt}(g)-1\\
&=& \mu(f)+\mathrm{mt}(f)-1,
\end{eqnarray*}
which finishes the corollary.
\end{proof}
\begin{corollary}\label{coro34}
Using the above notions, we have
\begin{eqnarray*}
\deg(\rho)\cdot\check{d}&\leq& d(d-1)-2\delta(C)+r(C)-\mathrm{mt}(C)\\
&=& d(d-1)-\mu(C)-\mathrm{mt}(C)+s(C)+\mathrm{Sw}(C),
\end{eqnarray*}
with equality if and only if $p$ is {\em multiplicity good (m-good)} for $C$, i.e. $p$ is m-good for all the $f_P$. 
\end{corollary}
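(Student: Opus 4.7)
The plan is to reduce the statement to the \emph{first Pl\"ucker formula} recalled in the introduction,
$$d(d-1)=\deg(\rho)\cdot\check d+\sum_{P\in\mathrm{Sing}(C)}\kappa(f_P),\eqno(3.1)$$
which is valid over an algebraically closed field of arbitrary characteristic (see \cite{Wal62,Pie78}), and then feed in Theorem~\ref{thm31} at each singular point together with Deligne's identity $\mu(f)=2\delta(f)-r(f)+1+\mathrm{Sw}(f)$ from the introduction.

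First I would rewrite (3.1) as
$$\deg(\rho)\cdot\check d=d(d-1)-\sum_{P\in\mathrm{Sing}(C)}\kappa(f_P).$$
Then, applying Theorem~\ref{thm31} to each local singularity $f_P$ gives
$$\kappa(f_P)\geq 2\delta(f_P)+\mathrm{mt}(f_P)-r(f_P),$$
with equality if and only if $p$ is m-good for $f_P$. Summing over $P\in\mathrm{Sing}(C)$ and using the additivity of the global invariants $\delta(C),\mathrm{mt}(C),r(C)$ by definition yields
$$\sum_{P\in\mathrm{Sing}(C)}\kappa(f_P)\geq 2\delta(C)+\mathrm{mt}(C)-r(C),$$
with equality iff $p$ is m-good for every $f_P$, i.e.\ for $C$. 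Substituting this into the rewritten Pl\"ucker formula produces the claimed inequality
$$\deg(\rho)\cdot\check d\leq d(d-1)-2\delta(C)+r(C)-\mathrm{mt}(C),$$
together with the stated equality criterion.

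For the second equality in the statement, I would invoke Deligne's formula $\mu(f_P)=2\delta(f_P)-r(f_P)+1+\mathrm{Sw}(f_P)$ recalled in the introduction and sum it over the $s(C)$ singular points:
$$\mu(C)=2\delta(C)-r(C)+s(C)+\mathrm{Sw}(C),$$
so that $2\delta(C)-r(C)=\mu(C)-s(C)-\mathrm{Sw}(C)$. Plugging this identity into the right-hand side of the inequality gives
$$d(d-1)-2\delta(C)+r(C)-\mathrm{mt}(C)=d(d-1)-\mu(C)-\mathrm{mt}(C)+s(C)+\mathrm{Sw}(C),$$
which is exactly the second form claimed.

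There is essentially no serious obstacle beyond correctly citing and applying (3.1); everything else is a direct combination of Theorem~\ref{thm31} with Deligne's identity. The only point requiring care is the verification that the classical Pl\"ucker equality (3.1) is indeed available in positive characteristic with the kappa invariant defined as the generic polar intersection multiplicity. Once this is on hand, the inequality and its second form follow immediately, as does the characterization of equality.
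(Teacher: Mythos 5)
Your argument is correct and follows essentially the same route as the paper: rewrite the first Pl\"ucker formula $(3.1)$, apply Theorem~\ref{thm31} at each singular point and sum, then use Deligne's identity $\mu=2\delta-r+1+\mathrm{Sw}$ to get the second form. The one difference is at the point you yourself flag as ``requiring care'': the paper does not simply cite \cite{Wal62}, \cite{Pie78} for $(3.1)$ in positive characteristic but, stating that no exact reference could be found, proves it directly via B\'ezout's theorem and the decomposition $\deg(\rho)=\mathrm{sdeg}(\rho)\cdot\mathrm{ideg}(\rho)$, using the ramification identity $i_P\left(C,P_Q\right)=\mathrm{ideg}(\rho)\cdot i_{\rho(P)}\left(\check{C},H_Q\right)$ at smooth points; to make your proof self-contained you would need to supply this verification rather than rely on the classical references.
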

\begin{proof}
The following formula is known as the first Pl\"ucker formula in positive characteristic,
\begin{eqnarray}\label{eq31}
\deg(\rho)\cdot\check{d}= d(d-1)-\sum_{P\in \mathrm{Sing}(C)}\kappa(f_P).
\end{eqnarray}
However we can not find an exact reference, so for the convenience of the reader, we give a short proof. We denote by $\mathrm{sdeg}(\rho)$ (resp. $\mathrm{ideg}(\rho)$) the separable (resp. inseparable) degree of $\rho$. Then there exists an open subset $V\subset \rho(C)$ such that 
$$\sharp\ \rho^{-1}(R)=\mathrm{sdeg}(\rho)\ \text{ for all } R\in V.$$ 
It is easy to see that there exists an open subset $U\subset C$ such that 
$$H_Q\cap \check{C}\subset V\ \text{ for all } Q\in U,$$
where for each point $Q=\left(\alpha\colon\beta\colon\gamma\right)\in \mathbb P^2$, $H_Q$ denotes the line in $\check{\mathbb P}^2$ defined by $\alpha X+\beta Y+\gamma Z$. Moreover it follows from the ramification theory that
$$i_P\left(C,P_Q\right)=\mathrm{ideg}(\rho)\cdot i_{\rho(P)}\left(\check{C},H_Q\right) \ \text{ for all } P\in C^{*},$$
where $P_Q$ denotes the polar curve of $C$ w.r.t. $Q$ defined by $\alpha F_x+\beta F_y+\gamma F_z$, and $i_P\left(C,P_Q\right)$ the intersection multiplicity of $C$ and $P_Q$ at $P$. Hence from B\'ezout theorem we have, with $Q$ generic,
\begin{eqnarray*}
d(d-1)&=&\sum_{P\in C} i_P\left(C,P_Q\right)\\
&=&\sum_{P\in \mathrm{Sing}(C)} i_P\left(C,P_Q\right)+\sum_{P\in C^{*}} i_P\left(C,P_Q\right)\\
&=&\sum_{P\in \mathrm{Sing}(C)}\kappa(f_P)+\mathrm{ideg}(\rho)\sum_{P\in C^{*}} i_{\rho(P)}\left(\check{C},H_Q\right)\\
&=&\sum_{P\in \mathrm{Sing}(C)}\kappa(f_P)+\mathrm{ideg}(\rho)\cdot \mathrm{sdeg}(\rho)\sum_{R\in \check{C}} i_{R}\left(\check{C},H_Q\right)\\
&=&\sum_{P\in \mathrm{Sing}(C)}\kappa(f_P)+ \deg(\rho)\cdot\check{d}.
\end{eqnarray*}
This completes the first Pl\"ucker formula. The corollary hence follows from Theorem \ref{thm31}.
\end{proof}
Combining Corollary \ref{coro33} and (\ref{eq31}) we obtain 
\begin{corollary}\label{coro35}
With the above notions, assume that 
$$\max_{P\in \mathrm{Sing}(C)}\{\kappa(f_P)\}<p,$$
(for example, $d(d-1)<p$). Then $C$ has no wild vanishing cycle, i.e. $\mathrm{Sw}(C)=0$. Moreover one has
\begin{eqnarray*}
\deg(\rho)\cdot\check{d}&=& d(d-1)-2\delta(C)+r(C)-\mathrm{mt}(C)\\
&=& d(d-1)-\mu(C)-\mathrm{mt}(C)+s(C).
\end{eqnarray*}
\end{corollary}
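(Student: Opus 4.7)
The plan is to localize the hypothesis at each singular point, apply Corollary~\ref{coro33} pointwise, and then globalize via the first Pl\"ucker formula~(\ref{eq31}). The whole argument is essentially bookkeeping; there is no real obstacle once Corollary~\ref{coro33} is in hand.

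First, I would dispatch the parenthetical remark that $d(d-1)<p$ suffices. Since $\deg(\rho)\cdot\check d\ge 0$, formula~(\ref{eq31}) gives
\[
\sum_{P\in\mathrm{Sing}(C)}\kappa(f_P)\le d(d-1),
\]
and in particular $\kappa(f_P)\le d(d-1)<p$ for every singular point $P$. So it is enough to work under the weaker hypothesis $\max_P\kappa(f_P)<p$.

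Next, for each $P\in\mathrm{Sing}(C)$ the hypothesis $\kappa(f_P)<p$ is precisely the hypothesis of Corollary~\ref{coro33} applied to the local equation $f_P$. That corollary yields $\mathrm{Sw}(f_P)=0$ together with the two local equalities
\[
\kappa(f_P)=2\delta(f_P)+\mathrm{mt}(f_P)-r(f_P)=\mu(f_P)+\mathrm{mt}(f_P)-1.
\]
Summing over the $s(C)$ singular points and using the definitions of $\delta(C),\mathrm{mt}(C),r(C),\mu(C),\mathrm{Sw}(C)$ (and noting that the constant $-1$ contributes $-s(C)$ in total), I obtain
\[
\mathrm{Sw}(C)=0,\qquad \sum_{P\in\mathrm{Sing}(C)}\kappa(f_P)=2\delta(C)+\mathrm{mt}(C)-r(C)=\mu(C)+\mathrm{mt}(C)-s(C).
\]

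Finally, I would substitute the two expressions for $\sum_P\kappa(f_P)$ into the first Pl\"ucker formula~(\ref{eq31}),
\[
\deg(\rho)\cdot\check d=d(d-1)-\sum_{P\in\mathrm{Sing}(C)}\kappa(f_P),
\]
which immediately produces the two claimed equalities. No step here is delicate: the only thing to be careful about is correctly tallying the $-s(C)$ constant when passing from the local to the global form of Corollary~\ref{coro33}.
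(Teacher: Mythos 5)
Your proposal is correct and matches the paper's own (one-line) proof, which likewise just combines Corollary~\ref{coro33} at each singular point with the first Pl\"ucker formula~(\ref{eq31}); your bookkeeping of the $-s(C)$ term and your justification of the parenthetical via $\sum_P\kappa(f_P)\le d(d-1)$ are both fine.
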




\end{document}